\theoremstyle:=definition,remark,plain\do{%
\expandafter\g@addto@macro\csname th@\theoremstyle\endcsname{%
\addtolength\thm@preskip\parskip}}
 \renewenvironment{abstract}
 { \normalsize
  \list{}{\setlength{\leftmargin}{.0cm}%
    \setlength{\rightmargin}{\leftmargin}}%
  \item {\bf \abstractname.} \relax}
 {\endlist}
\titlespacing*{\paragraph}{0pt}{3.25ex plus 1ex minus .2ex}{0.5ex plus .2ex}
\definecolor{dnrbl}{rgb}{0,0,0.3}
\definecolor{dnrgr}{rgb}{0,0.3,0}
\definecolor{dnrre}{rgb}{0.5,0,0}
\theoremstyle{plain}
\newtheorem{thm}{Theorem}[section]
\newtheorem{lem}[thm]{Lemma}
\newtheorem{coro}[thm]{Corollary}
\newtheorem*{coron}{Corollary}
\theoremstyle{definition}
\newtheorem{defi}[thm]{Definition}
\newcommand{\Nat}{\mathbb{N}}
\newcommand{\restr}{\upharpoonright}  
\newcommand{\un}{\uparrow} 
\newcommand{\de}{\downarrow} 
\DeclarePairedDelimiter{\tuple}{\langle}{\rangle}
\newcommand{\bigo}[1]{\mathop{\sc\textup O}\/\left({#1}\right)}
\newcommand{\sqbradn}[2]{\{\hspace{0.02cm}{#1} : {#2}\hspace{0.02cm}\}}
\newcommand{\sqbrad}[2]{\big\{\hspace{0.02cm}{#1} : {#2}\hspace{0.02cm}\big\}}
\newcommand{\sqbradlr}[2]{\left\{\hspace{0.02cm} {#1} : {#2}\hspace{0.02cm}\right\}}
\newcommand{\sqbradb}[2]{\big\{\ {#1} : {#2} \big\}}
\DeclarePairedDelimiter{\dbra}{\llbracket}{\rrbracket}
\newcommand{\etal}{\textit{et al}.}
\newcommand{\EE}{\mathcal{E}}
\newcommand{\II}{\mathcal{I}}
\newcommand{\DD}{\mathcal{D}}
\newcommand{\sqbradB}[2]{\Big\{\ {#1}\ :\ {#2}\ \Big\}}
\newcommand{\wgt}[1]{\mathop{\mathtt{wgt}}\/\left({#1}\right)}
\newcommand{\abs}[1]{|{#1}|}
\newcommand{\absb}[1]{\big|{#1}\big|}
\newcommand{\parb}[1]{\big({#1}\big)}
\newcommand{\parB}[1]{\Big(\hspace{0.04cm}{#1}\hspace{0.04cm}\Big)}
\newcommand{\parlr}[1]{\left({#1}\right)}
\newcommand{\ewt}[1]{\mathop{\mathtt{ewt}}\/\left({#1}\right)}
\newcommand{\LDF}{\mathtt{LDF}}
\newcommand{\TTast}{\mathcal{T}^{\ast}}
\newcommand{\TT}{\mathcal{T}}
\newcommand{\Tb}{\mathbf{T}}
\newcommand{\ml}{Martin-L\"{o}f }
\newcommand{\pz}{$\Pi^0_1$\ }
\newcommand{\ce}{c.e.\ }
\newcommand{\pf}{prefix-free }
\newcommand{\pfn}{prefix-free}
\newcommand{\wedga}{\ \wedge\ }
\newcommand{\VV}{\mathcal V}
\newcommand{\FF}{\mathcal F}
\newcommand{\twome}{2^{\omega}}
\newcommand{\twomel}{2^{<\omega}}
\newcommand{\cost}[1]{\mathsf{c}\hspace{0.03cm}({#1})}
\newcommand{\RCA}{\mathsf{RCA}}
\newcommand{\WKL}{\mathsf{WKL}}
\newcommand{\WWKL}{\mathsf{WWKL}}
\newcommand{\PPp}{\mathsf{P}^+}
\newcommand{\PPm}{\mathsf{P}^-}
\newcommand{\PPs}{\mathsf{P}}
\newcommand{\wgtw}{\mathtt{wgt}}
\newcommand{\ewtw}{\mathtt{ewt}}
\newcommand{\hthree}{\hspace{0.3cm}}
\newcommand{\impl}{\ \Rightarrow\ }
\title{Pathwise-randomness and models of second-order arithmetic\thanks{Supported by NSFC-11971501 and 
the Jiangsu Provincial Advantage Fund during a workshop in Nanjing in April 2019.
We thank Patey, Xia, Hirschfeldt and Jockusch for their constructive feedback.}}
\author[1]{George Barmpalias}
\affil[1]{State Key Lab of Computer Science, Inst.\ of Software, Chinese Acad.\ of Sciences, Beijing, China\vspace{0.1cm}}
\author[2]{Wei Wang}
\affil[2]{Inst.\ of Logic \& Cognition and Dept.\ of Philosophy, Sun Yat-Sen University, Guangzhou, China}
\begin{document}
\maketitle
\begin{abstract}
A tree  is {\em pathwise-random} if
all of its paths are \ml random. We show that: (a) no weakly 2-random real computes a perfect pathwise-random tree; it follows that the class of perfect
pathwise-random trees is null, with respect to any computable measure; 
(b) there exists a positive-measure pathwise-random tree which does not compute 
any complete extension of Peano arithmetic; 
and (c) there exists a perfect pathwise-random tree which does not compute any
tree of positive measure and finite randomness deficiency. 
We then obtain models of  second-order arithmetic that separate   compactness principles below
weak K\"{o}nigs lemma, answering questions by Chong \etal\ (2019).
\end{abstract}

\section{Introduction}\label{hCXa7dAofL}
Algorithmic randomness 
of infinite binary sequences, {\em reals},
has found applications in computability  as well as 
the analysis of the proof-theoretic strength of measure principles 
in second-order arithmetic.
{\em Trees of random reals}
are ubiquitous in measure-theoretic constructions in
computability  theory 
and, as recently suggested by \citet{Chong.Li.ea:2019},
essential in the study of compactness in reverse mathematics. 
Our goal is to:
\begin{itemize}
\item  establish essential computational properties of the {\em pathwise-random trees} 
\item  apply these to the classification of compactness principles in  second-order arithmetic.
\end{itemize}
A pleasant surprise is that some results about
pathwise-random trees are natural extensions of  well-known facts  
concerning the interplay between randomness and computational strength.

Trees of random reals are  distinct  from the  
{\em algorithmically random trees} of  
\citep{algClosedBarmp, DKH101007,DIAMOND2012519}, which express randomness
of the underlying branching mechanism in Galton-Watson processes, rather than the paths themselves.
Indeed, by the law of large numbers:
\begin{equation}\label{dz8cUP5Ndv}
\parbox{11.5cm}{algorithmically random trees (random outcomes of computable branching processes) contain 
paths which are not algorithmically random.}
\end{equation}
 In the probabilistic framework of random sets \citep{Molchanov}, as demonstrated in
 \citep{Axonphd,axon2015}, we may obtain trees of algorithmically random paths
 as random elements of  Poisson point processes.
 Using the hit-or-miss topology on the space $\TT$ 
 of trees  and the uniform measure $\mu$ on the Cantor space $\twome$,
a computable measure $\nu$ on $\TT$ can be obtained which
positively correlates $\mu(D)$ for $D\subseteq \twome$ with the
probability that the set of paths through a tree intersects $D$.
In this way, $\nu$-random trees, viewed as closed sets of reals, avoid effectively null sets: their
paths are algorithmically random.
This approach 
has certain limitations as we will soon see.

Variables $\sigma,\tau,\rho,\eta$  denote members of the set $\twomel$ of  finite binary strings, and variables $x,y,z$  denote reals.
Let $K(\sigma)$ denote the \pf Kolmogorov complexity of  
$\sigma$:  the length of the shortest self-delimiting (prefix-free) program that generates $\sigma$.
The (randomness) {\em deficiency} of $\sigma$ is $|\sigma|-K(\sigma)$. 
The deficiency of $x$ is the supremum of the deficiencies of its prefixes;
the {\em deficiency} of
a subset of $\twomel$ or $\twome$ is the supremum of the deficiencies of its members.  
A real $x$ is  {\em random}\footnote{equivalent to the standard notion of 
\citet{MR0223179}; different randomness notions will always be qualified.} 
(for the uniform distribution $\mu$) if it has finite deficiency: $\exists c\ \forall n\ K(x\restr_n)\geq n-c$.

A {\em tree} is a subset of $\twomel$  which is downward closed with 
respect to the prefix relation $\preceq$.  
Given trees $T, T'$ we say that {\em $T$ is a subtree of $T'$} if $T\subseteq T'$, and say that
\begin{itemize}
\item $T$  is {\em pruned} if each $\sigma\in T$ has at least one extension in $T$. 
\item $T$  is {\em perfect} if each $\sigma\in T$ has at least two prefix-incomparable extensions  in $T$. 
\end{itemize}
A {\em path} of a tree is a real with all of its prefixes in $T$;
let $[T]$ denote the set of paths through $T$ and 
say that $T$ is {\em positive} if  $\mu([T])>0$. 
The {\em width} of $E\subseteq\twomel$ is the function $n\mapsto |E\cap 2^n|$.
The width of a tree $T$ is the width of $T$ as a set of strings; similarly for the deficiency of $T$.
\begin{defi}\label{cXIfSc4wDC}
A tree  is $T$ {\em pathwise-random} if $[T]$ consists of random reals.  
\end{defi}
So why are pathwise-random trees useful?
Because of the following  property (see \S\ref{x1zDvaXnv2}): 
\begin{equation}\label{3b6GbFaLJQ}
\parbox{12.5cm}{{\em Universality:} Every  perfect  
pathwise-random tree computes a perfect subtree through every positive computable tree.}
\end{equation}
The class $\TT$ of pruned trees can be viewed as a homeomorphic copy of $\twome$.\footnote{Pruned trees are canonical representations of the closed sets in the Cantor space. The space of closed subsets of the Cantor space is compact, Hausdorff, without isolated points,  and has a countable base consisting of clopen sets. By Brouwer's theorem, any such space is homeomorphic to the Cantor space.}
The subclass consisting of the perfect trees 
is null with respect to any computable measure on $\TT$, see  \S\ref{DnYwAzLsF}.
%

\subsection{Our results on pathwise randomness}\label{DnYwAzLsF}
An {\em array of reals} is a sequence $(x_i)$ of distinct\footnote{pairwise non-equal} reals, and its deficiency is the deficiency of
$\sqbradn{x_i}{i\in\Nat}$. 
Any random real $x$ can be effectively decomposed into an array of random reals.
Consider the even and odd bit-sequences $x_0,x_1$ of $x$, 
so $x=x_0\oplus x_1$ where $\oplus$ is the join operator. Repeating this, 
we get $x_1=x_{10}\oplus x_{11}$,  
$x_{11}=x_{110}\oplus x_{111}$ and so on, generating: 
\[
A_x:=\sqbrad{x_{1^n\ast 0}}{n\in\Nat}
\hspace{0.5cm}\textrm{which we call  {\em the van Lambalgen array} of $x$}
\]
where $\ast$ denotes concatenation, which is 
an $x$-computable array. 
For each $n>0$, 
by \citet{vLamb90}  the real $x_{1^n\ast 0}$ is random relative to $\oplus_{i<n} x_{1^i\ast 0}$, so $A_x$
is infinite and consists of distinct random reals.
Hence every random real computes an array of random reals.
In contrast, we show that this fact is no longer true if we require a uniform bound on the randomness deficiency of
the reals in the array. 
Let  $\leq_T$ denote the Turing reducibility: $x\leq_T y$ means that $x$ is computable from $y$.
A \ml random real which forms a minimal pair with $\emptyset'$ in the Turing degrees is called
{\em weakly 2-random}.
\begin{thm}\label{G6z9r7C93e}
If $z$ is weakly 2-random:
\begin{enumerate}[\hspace{0.3cm}(a)]
\item no $z$-computable array of distinct reals has finite deficiency;  
\item there is no $z$-computable perfect pathwise-random tree.
\end{enumerate}
\end{thm}
Random reals that can compute a complete
extension of Peano arithmetic, in the form of a real encoding the theory, a {\em PA real}, 
also compute the halting problem $\emptyset'$ \citep{MR2258713frank} (also see \citep{Levin:2013:FI}), hence they are not weakly 2-random.
On the other hand, every PA real computes a perfect pathwise-random tree since
there exists a non-empty \pz class consisting of perfect pathwise-random trees. This was implicitly noted
in \citep[]{Chong.Li.ea:2019}, using the computable lower bounds on the density of \pz classes of random reals obtained in \citep{MR820784}.

By Theorem \ref{G6z9r7C93e} applied to the van Lambalgen decomposition of a real:
\begin{coro}
The van Lambalgen array of any weakly 2-random real
has infinite  deficiency.
\end{coro}
Theorem \ref{G6z9r7C93e} implies a result by \citet{deniscarlsch21}
about reals that are random relative to $\emptyset'$, known as {\em 2-random reals}:
no 2-random  computes a perfect pathwise-random tree.

Theorem \ref{G6z9r7C93e} can also be viewed  as a 
strong version of \eqref{dz8cUP5Ndv}, regarding  
trees produced by computable branching processes.
Algorithmically random trees may be identified with their real codes, which are
\ml random. Although
the halting problem  computes perfect pathwise-random trees, 
``properly random'' reals or trees, already from the level of weak 2-randomness,  do not compute perfect pathwise-random trees.

The standard topology on the pruned trees  is 
generated by the finite trees $F$ whose maximal nodes have the same length $\ell$ and 
represent the class of infinite pruned trees whose restriction to $2^{\leq \ell}$ equals $F$. This  coincides with the 
hit-or-miss and  Vietoris topologies  on the closed sets of $\twome$,  see \citep[Appendix B]{Molchanov}. 
Identifying pruned trees with  closed sets of $\twome$ and using the properties oft Turing functionals as measure transformations
we get:

\begin{coro}\label{Z9i8aeyowp}
The class of perfect pathwise-random trees with respect to the standard topology on the space $\TT$ of pruned trees is
$\nu$-null, for every computable measure $\nu$ on $\TT$.
The same is true of the subclass of trees of infinite width and finite deficiency.
\end{coro}\begin{proof}
Under a canonical effective embedding of $\TT$ into $\twome$, we may view $\TT$ as a set of reals and $\nu$ as a continuous measure
on $\twome$.  
By Theorem \ref{G6z9r7C93e} (a), the upward $\leq_T$-closure of $\TT$ is $\mu$-null. 
By \citep[Proposition 3.2 (i)]{BienvenuP16},  it follows that $\TT$ is $\nu$-null. A similar argument shows that the class of 
trees of infinite width and finite deficiency is $\nu$-null for every continuous computable measure on $\TT$.
\end{proof}
There is a connection between the above results and the work of \citet[Theorem 7.7]{BienvenuP16}
on {\em deep} \pz classes, namely effectively closed sets whose members are hard to produce probabilistically. 
We say that a tree is {\em computably-growing}  if its width is bounded below by a
computable unbounded nondecreasing function.
In \citep[Theorem 7.7]{BienvenuP16} it is shown that
no incomplete \ml random real computes
any member of a computably-growing tree of finite deficiency.
The main difference  is that  Theorem \ref{G6z9r7C93e} 
applies to  trees with infinitely many paths while
\citep[Theorem 7.7]{BienvenuP16}  is restricted to
computably-growing trees.
\begin{defi}
A set $V\subseteq\twomel$ is {\em super-fat} if $2^n=\bigo{|V\cap 2^n|}$.
\end{defi}
By forcing with positive trees, in \S\ref{Lhc6Zs8jLS} we show:
\begin{thm}\label{uZbmsMF7A6}
Given $z$ and positive tree $P$ which has no $z$-\ce super-fat subset, 
every positive tree has a perfect subtree $G$ such that 
$P$ has no $(z \oplus G)$-\ce super-fat subset.
\end{thm}
Applying the argument of \S\ref{Lhc6Zs8jLS} uniformly with  each $P_c:=\sqbrad{\sigma}{\forall\rho\preceq\sigma, K(\rho)\geq |\rho|-c}$ we get:
\begin{coro}\label{OYzd34gN1g}
There exists a perfect tree of finite deficiency which cannot 
computably enumerate any super-fat set of finite deficiency; in particular, it cannot compute
any positive tree of  finite deficiency.
\end{coro}
Patey has shown that every positive tree contains a perfect subtree which does not compute any PA degree.
The following non-trivial  extension of Patey's theorem is 
motivated by a fine separation of tree-like compactness principles in \S\ref{SY9RhzaFv}.
\begin{thm}\label{xJFhzQCXsu}
There exists a positive perfect 
pathwise-random tree  which does not compute any complete extension of Peano Arithmetic.
In fact, given
any non-computable  $z$,  every
positive tree has a positive perfect subtree $T\not\geq_T z$  
which does not compute any PA real.
\end{thm}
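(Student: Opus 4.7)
The plan is to derive the first sentence from the stronger ``in fact'' statement by applying the latter to the positive tree $T_c$ of prefixes of reals of deficiency at most $c$, which is $\Pi^0_1$ and satisfies $\mu([T_c]) \to 1$ as $c \to \infty$ (since every random real has some finite deficiency). Given a positive perfect subtree $T \subseteq T_c$, each $\sigma \in T$ is pruned to a path $x \in [T] \subseteq [T_c]$, so $K(\sigma) \geq |\sigma| - c$, making $T$ pathwise-random. Hence it suffices to prove the ``in fact'' statement, applied to $T_c$ intersected with any given positive tree (taking $F = \twomel$ in the absence of a specified $F$).

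For the ``in fact'' statement, given a non-computable $z$ and a positive tree $F$, I use a fusion-style forcing whose conditions are positive pruned subtrees of $F$, ordered by subtree inclusion. The target is a decreasing chain $F = F_0 \supseteq F_1 \supseteq \cdots$ with $T := \bigcap_n F_n$ being the desired subtree. To make $T$ perfect I require that at stage $n$ every node of $F_n$ of length at most $n$ has two incomparable extensions in $F_n$; to make $T$ positive I impose $\mu([F_{n+1}]) \geq \mu([F_n]) - 2^{-n-2}\mu([F_0])$, giving $\mu([T]) \geq \mu([F_0])/2 > 0$. At each stage, one of two requirement schemes is addressed: $R^z_e : \Phi_e^T \neq z$ and $R^{\mathrm{PA}}_e : \Phi_e^T$ is not a completion of $\mathrm{PA}$.

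For $R^z_e$, the standard diagonalization applies: if some positive $F' \subseteq F_n$ and input $k$ force $\Phi_e^T(k)$ to diverge or differ from $z(k)$ for all $T \subseteq F'$, I set $F_{n+1} := F'$; otherwise, by the non-computability of $z$, I can split $F_n$ into two positive subtrees on which $\Phi_e$ converges to different values at some input $k$, and select the branch whose value disagrees with $z(k)$ (the failure to split here would yield a $F_n$-computable approximation to $z$ that contradicts its non-computability). For $R^{\mathrm{PA}}_e$, I apply Patey's technique: the class of complete extensions of $\mathrm{PA}$ is a $\Pi^0_1$ set of measure zero, so within any positive tree one finds a positive subtree forcing $\Phi_e^T$ either to diverge or to output a formula inconsistent with $\mathrm{PA}$.

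The hard part will be to refine Patey's construction, which as cited produces a merely perfect subtree avoiding $\mathrm{PA}$, so that it delivers a positive perfect subtree compatible with the measure-preserving fusion and the $z$-avoidance slots. Patey's forcing is already expressed in terms of positive subtree conditions, so the adaptation amounts to choosing the subtrees at each step with measure within the allotted tolerance $2^{-n-2}\mu([F_0])$; the $R^z_e$ and $R^{\mathrm{PA}}_e$ schemes are handled in alternating stages, and the fusion of the chain in the Vietoris topology on $\TT$ produces a positive perfect subtree $T$ with the claimed computational independence properties.
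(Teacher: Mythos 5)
Your high-level plan is on track — derive the first sentence from the ``in fact'' clause via the positive $\Pi^0_1$ tree $T_c$ of strings of deficiency $\leq c$, then run a Mathias-style forcing with positive tree conditions, alternating cone-avoidance and non-PA requirements — and this is indeed the skeleton of the paper's proof in \S\ref{VhZ6RXMZ6v}. But there is a substantive gap in the measure bookkeeping, which you yourself flag as ``the hard part'' and then dispose of in a single sentence. That hard part is where all the new machinery of this theorem lives.

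Patey's conditions $(F,T)$ only enforce the \emph{pointwise} density $\mu_\sigma([\hat T \cap T]) > 3/4$ at each leaf $\sigma$ of the stem. This does not propagate to a global measure lower bound for $G=\bigcup_n F_n$: when passing from $(F,T)$ to $(E,S)$, the new leaf set $E$ is permitted to capture only a vanishing fraction of the mass above each old leaf, and nothing in the pointwise density constraint rules that out. That is precisely why Patey's original theorem yields a perfect but in general non-positive subtree. Your budget $\mu([F_{n+1}]) \geq \mu([F_n]) - 2^{-n-2}\mu([F_0])$ does not fix this, because the Patey non-PA step (when the auxiliary function is partial) intersects $T$ with side conditions $S_0, S_1$ whose only guarantee is pointwise density $>3/4$, not anything calibrated to your per-stage tolerance; so the claim that the adaptation ``amounts to choosing the subtrees at each step with measure within the allotted tolerance'' is exactly what you would need to prove and have not.

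The paper resolves this by attaching to each condition a \emph{lower density function} $d_p\in\LDF$ (Definition \ref{def:ppst-nPA_forcing}), whose extension relation
\[
d(\sigma)\cdot 2^{-|\sigma|} \;\leq\; \sum_{\sigma\preceq\tau\in\tuple{e}} e(\tau)\cdot 2^{-|\tau|}
\]
is a telescoping \emph{mass} constraint rather than a pointwise one; this is what makes the generic $P=\bigcup_i F_{p_i}$ positive. To keep the non-PA step (and the cone-avoidance step) compatible with it, the Condensation Lemma (\ref{lem:ppst-nPA_condensation}) first uses Lebesgue density to push the true per-leaf density of $T_p$ far above the targets $d_p(\sigma)$, leaving slack that absorbs the loss from the intersections $S_0\cap S_1\cap T_p$. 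Without articulating some device of this kind, your proposal does not go through.

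A secondary issue: your description of the non-PA step as ``the class of complete extensions of $\mathrm{PA}$ is a $\Pi^0_1$ set of measure zero, so within any positive tree one finds a positive subtree forcing $\Phi_e^T$ to diverge or be inconsistent with $\mathrm{PA}$'' is a non-sequitur — the measure of the target class of \emph{oracles} tells you nothing about the range of a Turing functional applied to trees. What actually does the work (in Patey's lemma and in Lemma \ref{lem:ppst-nPA_ndnc}) is that the canonical-value function $f(n)$ defined by compactness search over a $\Pi^0_1$ class of trees is $\Sigma^0_1$, hence diagonally computable, and the two forcing cases come from $f$ being total or partial.
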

After conference presentations of
Theorem \ref{xJFhzQCXsu}, \citep{confwuhwei,confjanjbar},  
\citet{pamiller} announced their independent discovery of the first part of Theorem \ref{xJFhzQCXsu}, in a study of covering properties.

The proofs in \S 3,  \S 4 are forcing arguments, which can be seen as adaptations of Mathias forcing to the space of trees.
Related forcing notions have been used in the study of Ramsey-type principles in reverse mathematics \citep{wang2013rainbow, dzhafarov2009ramseys}.
We do not assume prior knowledge of these arguments, but the expert in the above topic may find this existing framework useful.

\subsection{Our results on calibrating compactness principles}\label{SY9RhzaFv}
Two compactness principles are central in the development of mathematics inside
systems of second-order arithmetic: the
{\em Weak K\"{o}nig's Lemma} 
and the {\em Weak weak K\"{o}nig's Lemma}, denoted by
$\WKL$, $\WWKL$ respectively in Table \ref{iHCIFQhir}.
By {\em tree} we mean binary tree, as we defined it earlier in \S\ref{hCXa7dAofL}, and by {\em positive tree} $T$
we mean that the set of its paths has positive measure. Following \citep{{Chong.Li.ea:2019}}, the latter is formally expressed by
$\exists c\ \forall n\ |T\cap 2^{n}|\geq 2^{n-c}$.
We are interested in the strength of the following  weak variations of the perfect set theorem:\footnote{Clause
(c) is not a consequence of the perfect set theorem, but rather its proof.}
\begin{enumerate}[\hspace{0.5cm}(a)]
\item  $\PPp$: {\em Every positive  tree has a positive perfect subtree} 
\item  $\PPs$: {\em  Every positive  tree has a perfect subtree} 
\item $\PPm$: {\em  Every positive  tree has an infinite countable family of paths}.\footnote{Formally, 
if  $T$ is a positive tree, there exists a sequence $(x_n)$ of distinct reals, all lying on $T$.}
\end{enumerate}
The following implications amongst the principles of
Table \ref{iHCIFQhir} are provable in $\RCA_0$:
\[
\WKL\to \PPp\to\PPs\to\PPm\to\WWKL
\]
where the first one was observed by
\citet{Chong.Li.ea:2019} and the others are straightforward. 
We then show that these lie strictly between $\WKL, \WWKL$ in proof-theoretic strength,
and $\PPp$ is  strictly stronger than $\PPs$.
This  answers \citep[Questions 4.1, 4.2]{Chong.Li.ea:2019},
gives a fine calibration of strengths of compactness
by means of natural tree-like principles, and provides  additional negative evidence over the belief that
most well-known theorems in formal analysis are equivalent to one of a small collection of axioms
over recursive arithmetic. 
The separations are obtained via
the construction of appropriate $\omega$-models of second-order arithmetic:
models of the form $(\omega, S, +, \cdot, 0,1,\leq)$
where $\omega$ denotes the natural numbers and $S$ is a class of subsets of $\omega$.
The basis system $\RCA$ stands for {\em recursive comprehension} 
and is a strong formalization of computable mathematics: its minimal $\omega$-model is the computable sets.
\begin{table}
\renewcommand{\arraystretch}{1.2}
\colorbox{black!5}{\begin{tabular}{rl}
\cmidrule[0.5pt]{1-2}
{\small $\WKL$ :}  & {\small Every infinite  tree has a path} \\
{\small $\PPp$ :}  & {\small Every positive  tree has a positive perfect subtree} \\
{\small $\PPs$ :}  & {\small Every positive  tree has a perfect subtree} \\
{\small $\PPm$ :}  & {\small Every positive  tree has an infinite countable family of paths} \\
{\small $\WWKL$ :}  & {\small Every positive  tree has a path} \\
\cmidrule[0.5pt]{1-2}
\end{tabular}}\centering
\caption{Compactness principles 
derived by weakening  weak K\"{o}nig's lemma}\label{iHCIFQhir}
\end{table}
\begin{thm}[Models]\label{rms6rS57S4}
Each of the following extensions of $\RCA$ has an $\omega$-model:
\begin{enumerate}[(a)]
\item $\WWKL+\neg \PPm$: every positive  tree has a path but  
some positive tree does not have an infinite countable family of paths.
\item  $\PPs+\neg \PPp$: every positive  tree has a perfect subtree but some positive  tree has no positive perfect subtree.
\item $\PPp+\neg \WKL$: every positive  tree has a positive perfect subtree but some infinite  tree has no path.
\end{enumerate}
\end{thm}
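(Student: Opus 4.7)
My plan is to construct each of the three separating $\omega$-models as a countable Turing ideal $\mathcal{S}=\bigcup_n\mathcal{S}_n$ built stage by stage. In each case the separation is effected by pairing an ``easier'' principle (which I witness stage by stage using one of the main results of \S\ref{DnYwAzLsF}) with an invariant that blocks the ``harder'' principle throughout the whole of $\mathcal{S}$.

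For part (c), starting from the computable sets (which contain no PA real), I maintain the invariant that $\mathcal{S}_n$ contains no PA real. At each stage, for every positive tree $T$ in $\mathcal{S}_n$, I apply the relativized form of Theorem~\ref{xJFhzQCXsu} with $z$ a non-computable real coding $\mathcal{S}_n$ to select a positive perfect subtree $T'\subseteq T$ whose join with $\mathcal{S}_n$ still computes no PA real. Standard bookkeeping yields $P^+$ in the limit, and the absence of a PA real gives $\neg\WKL$ via the classical equivalence of $\WKL$ with PA.

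For part (b), the invariant maintained is: no real in $\mathcal{S}_n$ enumerates a positive pathwise-random tree. This holds initially because, by Theorem~\ref{G6z9r7C93e} together with the existence of randoms not above $\emptyset'$, no computable pathwise-random tree has unbounded width. Theorem~\ref{uZbmsMF7A6}(c) then supplies, inside each positive tree $T\in\mathcal{S}_n$, a perfect pathwise-random subtree $T'$ which preserves the invariant via clause (b) of that theorem; $T'$ is necessarily non-positive (else the invariant would fail at once) and witnesses $P$ at $T$. For $\neg P^+$ I designate at stage $0$ a positive tree $T_*\in\mathcal{S}_0$ with the feature that every positive perfect subtree of $T_*$ is forced to be pathwise-random; the invariant then prevents any positive perfect subtree of $T_*$ from entering $\mathcal{S}$.

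For part (a), I build $\mathcal{S}$ via a $\WWKL$-closed Scott-style iteration anchored on random reals whose joint Turing closure does not compute $\emptyset'$; then $\mathcal{S}$ models $\WWKL$ and, by Theorem~\ref{G6z9r7C93e}, no real in $\mathcal{S}$ enumerates a pathwise-random tree of unbounded width. For $\neg P^-$ I fix a positive $T_*\in\mathcal{S}$ having only finitely many computable paths and whose non-computable paths are all random. Any $\mathcal{S}$-listing of infinitely many distinct paths of $T_*$ would then, after discarding the finitely many computable paths, encode a $y$-enumeration (for some $y\in\mathcal{S}$) of a pathwise-random tree of unbounded width, contradicting Theorem~\ref{G6z9r7C93e}. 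The main obstacle---shared by (a) and (b)---is choosing $T_*$ so that it lies in $\mathcal{S}$ while retaining the structural rigidity (pathwise-randomness of its positive perfect subtrees in (b); randomness of its non-computable paths in (a)) needed to invoke the driving theorem, which will require a careful initial setup of $\mathcal{S}_0$ together with appropriate relativizations of Theorems~\ref{G6z9r7C93e} and~\ref{uZbmsMF7A6}.
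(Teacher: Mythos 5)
Your overall strategy matches the paper's: build a countable Turing ideal $\II=\bigcup_n\II_n$ by iterated closure under the ``positive'' principle using the relevant theorem from \S\ref{DnYwAzLsF}, while maintaining an invariant that blocks the ``negative'' principle, and exhibiting a distinguished computable tree as the witness for the negative clause. Your decomposition into ``iterate the theorem / preserve the invariant / designate $T_*$'' is exactly the paper's. For (c) you run a slightly different bookkeeping (applying the relativized Theorem~\ref{xJFhzQCXsu} to every positive tree in $\II_n$ directly, rather than producing one generic pathwise-$z_n$-random tree $T_n$ per stage and then invoking Lemma~\ref{nqjFhRoRlv} to witness $\PPp$); both work, though the paper's version reuses Lemma~\ref{nqjFhRoRlv} uniformly across (a), (b), (c), whereas yours bypasses it in (c).

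The genuine gap is precisely the point you flag at the end as ``the main obstacle'': the choice of $T_*$. This is not an obstacle in the paper -- it is solved once and for all by taking $T_*:=\hat{P}$, a computable tree with $[\hat{P}]\subseteq P_c$ where $P_c=\{x:\forall n\ K(x\restr_n)\geq n-c\}$ is the universal positive $\Pi^0_1$ randomness class of \S\ref{x1zDvaXnv2}. Because $\hat{P}$ is computable, it lies in $\II_0$ of every ideal; because $[\hat{P}]$ has deficiency bounded by $c$, \emph{every} subtree of $\hat{P}$ is pathwise-random with the same bound. This gives the rigidity you need for free: in (b), any positive perfect subtree of $\hat{P}$ in $\II$ would be a positive pathwise-random tree enumerated by a member of $\II$, contradicting the invariant; in (a), the closure of any $\II$-coded infinite family of paths through $\hat{P}$ is a pathwise-random tree of unbounded width, Turing-below some random incomplete real in $\II$, contradicting Theorem~\ref{G6z9r7C93e}. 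Your hypothesis in (a) that $T_*$ has ``finitely many computable paths and all non-computable paths random'' is both unnecessary (computable reals have infinite deficiency, so $\hat{P}$ has none) and too weak: mere randomness of the paths does not yield a uniform deficiency bound, which is what Theorem~\ref{G6z9r7C93e} requires; pathwise-randomness of $T_*$ is what is needed, and it is exactly what $\hat{P}$ delivers. Finally, the concrete realization of your ``random reals whose joint Turing closure does not compute $\emptyset'$'' in part (a) is the van Lambalgen decomposition $z_n:=\bigoplus_{i\leq n}x_{1^i\ast 0}$ of a single incomplete random real $x$, which gives the tower of incomplete random reals in one stroke and lets $\WWKL$ fall out of Ku\v{c}era's theorem~\eqref{Yx3uuoP6eA}.
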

There are certain flexibilities regarding the choice of these models, for example cone-avoidance: they can
avoid containing any given non-zero Turing degree. These properties follow from the results of \S\ref{DnYwAzLsF}
and their refinements in \S\ref{CuGmiIbvI}, \S\ref{Lhc6Zs8jLS}, \S\ref{36Hjxwj7uW}. 
The overlap 
with the recent work of \citet{deniscarlsch21} and \citet{pamiller} is clarified in \S\ref{Ug2xqnGYTz}.

\subsection{Background and recent work on models of compactness}\label{Ug2xqnGYTz}
Compactness is a generalization of finiteness \citep{amathmoncomp} and 
has played a leading role in the foundations of modern mathematics in the 19th century,
especially analysis and measure theory; today,
it continues to be crucial  across 
mathematics \citep{amathmon122.7}. This  is reflected in the formalization of mathematics
in second-order arithmetic, where
Weak K\"{o}nig's Lemma can support the formal development of
a large fragment of analysis,
on the basis of a standard system of computable mathematics
\citep[\S IV]{Simpson:2009.SOSOA}.
Weak weak K\"{o}nig's Lemma is weaker  but sufficiently strong for
the formal development of much of measure theory \citep{yuxiaokang_phd,Yu.Simpson:1990}.
We focus on the principles between $\WKL$ and $\WWKL$, displayed in Table \ref{iHCIFQhir}.
Background on reverse mathematics can be found in  \citep{Simpson:2009.SOSOA, denislicing}, while 
\citep{AVIGAD20121854,Brown_vitali, KjosHanssenMStrans, binnskjos2009} focus on
formal  principles below $\WKL$. 

{\bf Models of compactness.}
 Formal implications of principles are always considered over the basic
system  $\RCA_0$ which is recursive arithmetic with $\Sigma^0_1$ induction.\footnote{The subscript 0 indicates
the presence of $\Sigma^0_1$ induction, and in most texts the reader will find the notation $\WKL_0$ in place of $\WKL$, emphasizing
the strength of induction in $\RCA_0+\WKL$. We do not need to be so specific here as our separations do not depend on the amount of
induction available.}
From a foundational point of view, the system $\RCA_0+\WKL$ is
of special interest as it can be seen as an expression of  primitive recursive arithmetic 
(Hilbert's finitism): it includes and is conservative over 
primitive recursive arithmetic with respect to $\Pi^0_2$ 
sentences.\footnote{it does not prove any such sentences that are not provable in primitive recursive arithmetic.}
%
Separation of a pair of principles is  obtained by constructing
a model of one of them, in which the other principle is not true.
We restrict our attention to {\em $\omega$-models} of second-order arithmetic, 
which are models whose first-order part is standard: they are of the form $(\omega, S, +, \cdot, 0,1,\leq)$
where $\omega$ denotes the natural numbers and $S$ is a class of subsets of $\omega$.
An $\omega$-model $(\omega, S, +, \cdot, 0,1,\leq)$ 
is strictly determined by its {\em universe} $S$, so for simplicity we may identify the two.
The basis system $\RCA_0$ is a formal expression of computable mathematics, and it has a minimal $\omega$-model: the
class of computable sets.
The $\omega$-models $S$ of the basis system are  the Turing ideals: the
classes of subsets of $\omega$ that are closed under the join operator in the Turing degrees and 
downward closed under the Turing reducibility. 

\subsection{Algorithmic randomness and trees}\label{x1zDvaXnv2}
The full binary tree represents the Cantor space, with topology generated by the sets
\[
\dbra{\sigma}:=\sqbrad{z\in\twome}{\sigma\prec z}
\hspace{0.5cm}\textrm{and}\hspace{0.5cm}
\dbra{V}=\cup_{\sigma\in V}\dbra{\sigma}
\hspace{0.5cm}\textrm{for $V\subseteq\twomel$}
\]
and the uniform measure
given by $\mu(\sigma):=\mu(\dbra{\sigma})=2^{-|\sigma|}$ and
$\mu(V):=\mu(\dbra{V})$.
\ml randomness can be defined with respect to any computable measure on $\twome$, but
when no such qualification is determined, the uniform probability measure is assumed.
A \ml test is a uniformly \ce sequence of $V_i\subseteq\twomel$ such that $\mu(V_i)<2^{-i}$.
Following \citet{MR0223179}, we say that $x$ is random if $x\not\in \cap_i \dbra{V_i}$ for all such tests $(V_i)$.
This notion can be relativized to any oracle; random reals are also referred to as {\em 1-random}; 
 {\em 2-random} means random relative to the halting problem $\emptyset'$. 
A $\Pi^0_2$ class of reals is a set of the form $\cap_i \dbra{V_i}$ where $V_i$ are uniformly \ce sets of strings.
We say that $x$ is {\em weakly 2-random} if it is not a member of any  $\Pi^0_2$ class of measure 0. 
The weakly 2-random reals are exactly the \ml random reals which form a minimal pair with $\emptyset'$ in the Turing degrees, see
\citep[Corollary 7.2.12]{rodenisbook}.

Equivalently, randomness can be defined in terms of incompressibility, via Kolmogorov complexity.
A \pf machine is a Turing machine whose domain is a \pf set of strings. 
The \pf Kolmogorov complexity of $\sigma$ with respect to a \pf machine $M$, denoted by $K_M(\sigma)$,
is the length of the shortest input $\rho$ such that $M(\rho)$ converges and outputs $\sigma$. There are optimal
\pf machines $M$, such that $\sigma\mapsto K_M(\sigma)$ is minimal up to a constant, with respect to all \pf machines.
We fix an optimal \pf machine and let $K(\sigma)$ denote the \pf complexity of $\sigma$ with respect to this fixed machine.
Recall the notion of (randomness) {\em deficiency} given earlier, above Definition \ref{cXIfSc4wDC}.
A real is \ml random if and only if it has finite deficiency \citep[Theorem 6.2.3]{rodenisbook}.

The classes: 
 \[
 P_c=\{x\ |\ \forall n\ K(x\restr_n)\geq n-c\}
 \hspace{0.5cm}\textrm{and}\hspace{0.3cm}
 \hat P_c=\{\sigma\ |\ \forall \rho\preceq\sigma \ K(\rho)\geq |\rho|-c\}
 \]
are uniformly \pz and $\forall c: \mu(P_c)\geq 1-2^{-c}$ (see \citep[\S 6.2]{rodenisbook}). 
So the sets $V_i=\twomel-\hat P_i$ form a \ml test, which is {\em universal} in the sense that
$\cap_i \dbra{V_i}$ contains  exactly the set of reals which are not \ml random. By relativizing with respect to an oracle $z$, 
we may talk of the {\em $z$-deficiency}, {\em $z$-randomness} and
{\em pathwise-$z$-random} trees.

Given a tree $T$, $\sigma\in \twomel$, the tree $\sqbrad{\tau}{\sigma\ast \tau\in T}$
is called a {\em tail} of $T$. For each $F\subseteq\twomel, W\subseteq\twome$ define
$\sigma\ast F:=\sqbrad{\sigma\ast\tau}{\tau\in F}$ and 
$\sigma\ast W:=\sqbrad{\sigma\ast z}{z\in W}$.

\citet{MR820784} proved the following ergodicity property for each real $z$:
\begin{equation}\label{Yx3uuoP6eA}
\parbox{12cm}{if $F\subseteq\twome$ is   
 $\Pi^0_1(z)$  and $\mu(F)>0$, every $z$-random  $x$ has a tail in $F$:
 $\exists \sigma,\ x\in \sigma\ast F$}
\end{equation}
which shows that the $(P_i)$ are universal in a certain way,\footnote{The universality of $P_c$
in the positive \pz classes is analogous to that of the 
halting problem with respect to computable enumerations, and that of the \pz class of $PA$-complete reals with respect to \pz classes; see \cite{simpsonmassurv}.}
useful in inductive constructions of $\omega$-models of $\WWKL$.
We give a tree-analogue of \eqref{Yx3uuoP6eA} which
will be similarly useful in the construction of the models of
Theorem \ref{rms6rS57S4} and was also used by
 \citet{deniscarlsch21}. 
\begin{lem}\label{nqjFhRoRlv}
Given $z$, let $T$ be
a tree such that $[T]$ consists of $z$-random  reals and let 
$W$ be a $z$-computable positive tree. 
There exists $\sigma$ such that $[T]\cap\dbra{\sigma}\neq\emptyset$ and $[T]\cap\dbra{\sigma}\subseteq \sigma\ast [W]$. 
\end{lem}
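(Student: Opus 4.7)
The plan is to argue by contradiction, adapting the standard ergodicity argument behind \eqref{Yx3uuoP6eA} to run along the tree $T$. Suppose no such $\sigma$ exists; the goal is to construct a path $y\in[T]$ that fails $z$-randomness, contradicting the hypothesis on $[T]$. Since $W$ is $z$-computable and positive, write $\twome\setminus[W]=\dbra{U}$ for some $z$-\ce \pf antichain $U$, so $\mu(\dbra{U})=1-\alpha$ where $\alpha:=\mu([W])>0$. For each $n\geq 1$, let
\[
U_n:=\{\tau_1\ast\cdots\ast\tau_n:\tau_1,\ldots,\tau_n\in U\}
\hspace{0.5cm}\textrm{and}\hspace{0.5cm}
V_n:=\dbra{U_n}.
\]
A prefix-by-prefix comparison using the prefix-freeness of $U$ shows that each $U_n$ is itself an antichain, and a direct induction gives $\mu(V_n)=(1-\alpha)^n$. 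The $V_n$ are uniformly $z$-\ce, so passing to $V_{kn}$ with $k$ large enough that $(1-\alpha)^k\leq 1/2$ yields a $z$-\ml test.

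The failure of the conclusion says: for every $\sigma$ with $[T]\cap\dbra{\sigma}\neq\emptyset$ there exist $x\in[T]\cap\dbra{\sigma}$ and $\tau\in U$ with $\sigma\ast\tau\prec x$. Using this, I inductively pick $\tau_0,\tau_1,\ldots\in U$ and $x_0,x_1,\ldots\in[T]$ so that $\tau_0\ast\cdots\ast\tau_i\prec x_i$: start from the empty string, using $[T]\neq\emptyset$; at stage $i+1$, since $x_i$ witnesses $[T]\cap\dbra{\tau_0\ast\cdots\ast\tau_i}\neq\emptyset$, apply the failure statement at $\sigma=\tau_0\ast\cdots\ast\tau_i$ to obtain $\tau_{i+1}\in U$ and $x_{i+1}\in[T]$ with $\sigma\ast\tau_{i+1}\prec x_{i+1}$. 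Set $y:=\tau_0\ast\tau_1\ast\tau_2\ast\cdots$; each prefix $\tau_0\ast\cdots\ast\tau_i$ is an initial segment of $x_i\in[T]$ and hence lies in $T$, so by prefix closure $y\in[T]$. But $\tau_0\ast\cdots\ast\tau_{n-1}\in U_n$ forces $y\in V_n$ for every $n$, so $y$ is not $z$-random: contradiction.

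The main delicate point is ensuring that the diagonal object $y$ assembled from the successive witnesses actually lies in $[T]$, which is why the construction must carry an actual path $x_i\in[T]$ at each stage rather than a mere prefix; everything else reduces to a routine measure computation, with the positivity of $W$ entering only through $1-\alpha<1$, which is what turns the rescaled sequence $(V_{kn})$ into a bona fide \ml test.
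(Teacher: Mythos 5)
Your proof is correct and follows essentially the same route as the paper's: negate the conclusion, take the $z$-\ce prefix-free set $U$ with $[W]=\twome\setminus\dbra{U}$, iterate the failure clause to build a path $\tau_0\ast\tau_1\ast\cdots\in[T]$ lying in every $\dbra{U_n}$, and note that $\mu(\dbra{U_n})=(1-\alpha)^n\to 0$ gives a $z$-\ml test catching that path. The only cosmetic difference is that you carry explicit witnesses $x_i\in[T]$ at each stage, whereas the paper simply records the equivalent nonemptiness condition $[T]\cap\dbra{\tau_0\ast\cdots\ast\tau_n}\neq\emptyset$ and uses prefix-closure of $T$ directly.
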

\begin{proof}
If the claim was not true, then for each  $\sigma$:
\begin{equation}\label{8SVMYAalVq}
[T]\cap\dbra{\sigma}\neq\emptyset
\hspace{0.5cm}\Rightarrow\hspace{0.5cm}
[T]\cap\dbra{\sigma}-\sigma\ast [W]\neq\emptyset
\end{equation}
Let $Q\subseteq\twomel$ be a $z$-\ce \pf set such that $[W]=\twome-\dbra{Q}$.
Then applying \eqref{8SVMYAalVq} for $\sigma:=\lambda$, there exists
$\tau_0\in Q$ such that $[T]\cap \dbra{\tau_0}\neq\emptyset$.
Repeating the application of \eqref{8SVMYAalVq} for 
$\sigma:=\tau_0$, there exists $\tau_1\in Q$ such that 
$[T]\cap \dbra{\tau_0\ast\tau_1}\neq\emptyset$.
Continuing in this way we obtain  $(\tau_i)$ and $x:=\tau_0\ast\tau_1\ast\cdots$ such that $x\in [T]$
and $x\in \dbra{Q^n}$ for each $n$, where $Q^n$ denotes the $n$-repetition of the concatenation $Q\ast\cdots \ast Q$, 
and $Q\ast Q:=\sqbrad{\sigma\ast\rho}{\sigma,\rho\in Q}$. Since  $W$ positive, $\mu(Q)< 1$.
Since $\mu(Q^n)=(\mu(Q))^n$ and $Q$ is $z$-c.e., it follows that $x$ is not $z$-random, 
contradicting the assumption about
$T$ and concluding the proof.
\end{proof}
Universality property  \eqref{3b6GbFaLJQ} is a special case of the following.
\begin{coro}\label{nqjFhRoRlva}
Every  perfect  
pathwise-$z$-random tree computes a perfect subtree through every positive $z$-computable tree.
\end{coro}\begin{proof}
This follows by Lemma \ref{nqjFhRoRlv} since 
whenever $T$ is  perfect, the same holds for 
its restriction to any $T$-extendible node $\sigma$.
\end{proof}
{\bf A word on notation.}
We maintain some consistency in our notation, reserving 
$i, j, k, n, m, t, s, r$ for non-negative integers, $\sigma,\tau,\rho$
for strings, $x,y,z$ for reals, $f, g, h$ for functions on  integers or strings, $\Phi, \Psi$ for functionals, 
and $T, F, Q, H, W, B, E, P, D, G, S, U, V$ for sets of reals or strings, often finite or infinite trees. 
We occasionally use the accents $d', d''$ of a variable $d$ as a 
distinct variable, and note that this does not denote the jump operation, except for 
$\emptyset'$.  We write $f(n)\de, f(n)\un$ to denote that $f$ is defined or undefined
respectively, on $n$.

\section{Random reals and  pathwise-random trees: Theorem \ref{G6z9r7C93e}}\label{CuGmiIbvI}
We show that weakly 2-random reals cannot compute arrays of reals of finite deficiency.

\subsection{Hypergraphs from computations of arrays}\label{rz6t4EOhsN}
Computations of arrays of reals are induced by effective maps from strings to finite sets of strings, which in turn are  conveniently viewed as
hypergraphs whose vertices are strings and hyperedges are \pf sets of strings. 
\begin{defi}\label{O1f2ngxwtX}
A tuple $(\VV,\FF)$ refers to a hypergraph with vertex-set a \pf  $\VV\subseteq\twomel$ 
and edge-set $\FF$ consisting of finite subsets of $\VV$. A function $E\mapsto\wgt{E}$ gives a real positive weight to each edge
such that the total edge-weight (the sum of the weights of all edges) is $\leq 1$.
 The  {\em edge-weight} of $\sigma\in \VV$ is given by: 
\[
\ewt{\sigma}=\sum_{\FF\ni E\ni \sigma} \wgt{E}.
\]
We often abbreviate $(\VV,\FF)$ to $\FF$ and say that $\FF$ is $k$-fat if $|E|\geq k$ for each $E\in \FF$.
\end{defi}
We use a counting argument for the existence of {\em light} vertices in such hypergraphs, 
in the sense that their weight is considerably 
smaller than the weight of the edges they belong to.

\begin{lem}\label{bWqFoSOZ8}
Given a $k$-fat hypergraph $(\VV,\FF)$ with total edge-weight  $\delta$, 
there exists $\tau\in\VV$
such that  $k \delta\cdot 2^{-|\tau|} \leq\ewt{\tau}$.
\end{lem}\begin{proof}
Given $(\VV,\FF)$ as in the statement:
\begin{equation}\label{eNVceNvVVI}
\sum_{\sigma\in \VV} \ewt{\sigma}\geq k \delta
\end{equation}
since in this sum the weight of each edge of $\FF$ appears
at least $k$ times -- once for every vertex that it contains.
Assume, for a contradiction,
that for each vertex $\sigma\in \VV$, 
 $\ewt{\sigma}< k\delta\cdot  2^{-|\sigma|}$. 
 By \eqref{eNVceNvVVI}, since $\VV$ is \pfn:
 \[
k \delta   \leq 
\sum_{\sigma\in \VV} \ewt{\sigma}
<   k \delta \cdot \sum_{\sigma\in \VV}  2^{-|\sigma|} \leq k\delta
\]
which is the required contradiction.
\end{proof}
We now consider computations of arrays of reals and define their associated hypergraphs.
\begin{defi}\label{def:array-functional}\label{2cBbey6FMZ}
An \emph{array-functional} is a Turing functional $\Phi$
such that for each $x$, $i, j$:
\[
\parb{\Phi(x; i,j)\de\wedga \Phi(x; i,j+1)\de}\impl 
\parb{\Phi(x; i,j)\in 2^j \wedga \Phi(x; i,j)\prec \Phi(x; i,j+1)}.
\]
We let $\Phi_s$ denote the approximation of $\Phi$ at stage $s$ and $\Phi(x; i):=\cup_j \Phi(x; i,j)$.
\end{defi}

\subsection{Array-functionals and pathwise-randomness}\label{SMCqyboIt}
The  hypergraphs considered above are used in the proof of the following technical fact.
Given array functional $\Phi$, a computation $\Phi_s(x; i)\de$ with oracle-use $\ell$ may be denoted as
$\Phi_s(\eta; i)\de$ for any string $\sigma$ with prefix $x\restr_{\ell}$. By  convention, the length of all 
strings in $\Phi_s(\eta; i)\de$ is $<s$.

\begin{lem}\label{nSHH8pXU}
Given an array-functional $\Phi$,
there exists a Martin-L\"{o}f test $(V_n)$ such that 
\[
\forall n\ \ \parb{\textrm{$V_n$ is \pf\  and\ \ } \mu(Q_n) \leq 2^{-n}}
\]
where $x\in Q_n$ iff $\Phi(x; i), i< 2^{2n}$ are defined, distinct, and have no prefix in $V_n$.
\end{lem}\begin{proof}
Uniformly in $n$, we define an effective enumeration $V_{n}(s)$ of $V_n$,
where $V_{n}(0) = \emptyset$.


Assuming that $V_{n}(s)$ is defined, for $t < s$ let
\begin{align*}
F_{n}(s)\       &:=\ \sqbrad{x}{\exists i< 2^{2n},\ \Phi_s(x; i)\cap V_{n}(s)\neq\emptyset}
\hspace{0.3cm}\textrm{and}\hspace{0.3cm}
E^x_{n}(s,t)\ :=\ \bigcup_{i< 2^{2n}}\Phi_s(x; i)\cap 2^t
\end{align*}
Consider the $2^{2n}$-fat hypergraph  $\mathcal{G}_{n}(s,t) := \parb{2^t, \mathcal{E}_{n}(s,t)}$ with
\begin{itemize}
\item $\EE_{n}(s,t)\ :=\ \sqbrad{E^x_{n}(s,t)}{\abs{E^x_{n}(s,t)}\geq 2^{2n}\wedga x\not\in\dbra{F_{n}(s)}}$
\item $X(E) := \sqbrad{x}{E\in \mathcal{E}_{n}(s,t)\wedga E = E_{n}^x(s,t)}$
\item edge-weight  $\wgtw_{n}(E, s,t) := \mu(X(E))$ for $E\in \EE_{n}(s,t)$.
\end{itemize}
The induced edge-weight of $\tau\in 2^t$ is denoted by $\ewtw_{n}(s,t,\tau)$. 

Since only the first $s$ bits of $x$ affect the computation $\Phi_s(\eta; i)$:
\begin{enumerate}[(i)]
\item $\mathcal{G}_{n}(s,t)$,  $\wgtw_{n}(E, s,t)$ and $\ewtw_{n}(s,t,\tau)$ are computable from $s,t, n,\tau$
\item $X(E), E\in \EE_{n}(s,t)$ are  pairwise disjoint, clopen and can be viewed as subsets of $2^s$.
\end{enumerate}
By (ii), the total edge-weight 
\[
g_{n}(s,t) := \sum_{E \in \mathcal{E}_{n}(s,t)} \wgtw_{n}(E,s,t)
\]
is at most 1.
Define $V_{n}(s+1)$ inductively: 
\begin{enumerate}[\hthree (a)]
\item if $\forall t<s,\ g_{n}(s,t) \leq 2^{-n}$ let $V_{n}(s+1) = V_{n}(s)$;
\item otherwise pick the least $t < s, \tau_{n}(s) \in 2^t$ with $g_{n}(s,t) > 2^{-n}$ and
\[
\ewtw_{n}(s,t, \tau_{n}(s)) \geq 2^{2n - t}\cdot g_{n}(s,t) > 2^{n-t} = 2^{n - |\tau_{n}(s)|}
\]
which exist by Lemma \ref{bWqFoSOZ8}. Let $V_{n}(s+1) := V_{n}(s) \cup \{\tau_{n}(s)\}$.
\end{enumerate}
This completes the computable definition of $V_n(s+1)$ and
the inductive definition $(V_n(s))$. 

The string $\tau_{n}(s)$ of clause (b) has $\ewtw_{n}(s,t, \tau_{n}(s))>0$, so 
\begin{itemize}
\item $\tau_{n}(s)$ belongs to an edge of $\mathcal{G}_{n}(s,t)$
\item by the definition of  the edge-set of $\mathcal{G}_{n}(s,t)$, 
$\tau_{n}(s)$ has no prefix in $V_{n}(s)$.
\end{itemize}
By induction, $V_n := \cup_s V_{n}(s)$ is prefix-free.
Since $(V_n(s))$ is computable, the $V_n$ are uniformly c.e., and it remains to establish 
$\mu(V_n) < 2^{-n}$. Note that
\[
V_{n}(s+1) \neq V_{n}(s)\impl 
  \mu(\dbra{V_{n}(s+1)} - \dbra{V_{n}(s)}) = 2^{-|\tau_{n}(s)|} \leq 2^{-n}\cdot \ewtw_{n}(s,t, \tau_{n}(s)),
\]
where $t = |\tau_{n}(s)|$ and $\ewtw_{n}(s,t, \tau_{n}(s))=\mu(\{\eta\in 2^{s} : \dbra{\eta}\cap \dbra{F_{n}(s)}=\emptyset\wedga 
\tau_{n}(s) \in E_{n}^\eta(s,t)\})$. 

Since  $\tau_{n}(s) \in E_{n}^{\eta}(s,t)\impl \dbra{\eta}\subseteq \dbra{F_{n}(s+1)}$ we have
\[
\ewtw_{n}(s,t, \tau_{n}(s)) \leq \mu(F_{n}(s+1)-F_{n}(s)).
\]
so the growth of $\mu(V_n)$  is dominated by the growth of $\mu(F_n)$. So
$\mu(V_n) < 2^{-n}\cdot  \mu(F_n) \leq 2^{-n}$.

Finally, we show that $\mu(Q_n) \leq 2^{-n}$ by way of contradiction.
Assuming that $\mu(Q_n) > 2^{-n}$,
there exists $t$ such that $g_{n}(s,t) > 2^{-n}$ for all sufficiently large $s$.
The construction would then eventually enumerate all $\tau \in 2^t$ into $V_n$, making $Q_n$ empty, which is a contradiction.
\end{proof}

\begin{lem}\label{W6K2uziWtO}
If $V_i\subseteq\twomel$ are \pf and uniformly \ce  with $\mu(V_i)< 2^{-i}$ then
\[
\exists c\ \forall \sigma\in V_{2i}: K(\sigma)\leq |\sigma|-i+c.
\]
\end{lem}\begin{proof}
By the minimality  of $K$, it suffices to produce a \pf machine $M$ with
\[
\forall i\ \forall \sigma\in V_{2i}: K_{M}(\sigma)\leq |\sigma|-i.
\]
The weight of the domain of such $M$ is
\[
\sum_i 2^i\cdot \mu(V_i)< \sum_i  2^{-i}=1.
\]
so by the Kraft-Chaitin-Levin theorem \citep[Theorem 3.6.1]{rodenisbook} such a machine exists. 
\end{proof}
\begin{coron}[Theorem \ref{G6z9r7C93e}]
If $z$ is weakly 2-random then 
\begin{enumerate}[\hspace{0.3cm}(a)]
\item no  $z$-computable array of reals has  finite deficiency
\item there is no $z$-computable perfect pathwise-random tree.
\end{enumerate}
\end{coron}\begin{proof}
We first show how to derive (b) from (a), by proving the contrapositive. 

Consider the \pz positive trees $B:=\sqbrad{\sigma}{\forall \rho\preceq\sigma,\ K(\rho)\geq |\rho|-1}$.

Let $W$ be a computable tree with $[W]=[B]$ and $B\subseteq W$. 
By Lemma \ref{nqjFhRoRlv}  and  $\neg$(b), $z$
computes a perfect subtree $T$ of $W$. So $[T]\subseteq [W]\subseteq [B]$ and
since $T$ does not have dead-ends,
it computes an array of pairwise distinct reals in $[B]$. 
By the choice of $B$, this is an array of reals of finite deficiency and is computable by $z$. So $\neg$(a).

For (a), suppose that $z$ computes an array $(y_i)$ of reals with  finite deficiency. 
The $y_i$ are pairwise non-equal 
and there exists an array functional $\Phi$ which is total on $z$ and the output $\Phi(z)$ is the array$(y_i)$.
It suffices to show that $z$ is not weakly 2-random. 
Let 
\begin{itemize}
\item $[\Phi(z)]:=\sqbrad{y_i}{i\in\Nat}$. 
\item $(V_i)$, $(Q_n)$  be as in Lemma \ref{nSHH8pXU} with respect to $\Phi$.
\end{itemize}
By Lemma \ref{W6K2uziWtO} and since $[\Phi(z)]$ has finite deficiency, 
$\exists i_0\ \forall i\geq i_0:\ [\Phi(z)]\subseteq \twome -\dbra{V_i}$.

By Lemma \ref{nSHH8pXU} it follows that $z\in \cap_{i\geq i_0} Q_i$ and $\mu(\cap_{i\geq i_0} Q_i)=0$. 

Since $\cap_{i\geq i_0} Q_i$ is a $\Pi^0_2$ class, it follows that $z$ is not weakly 2-random. 
\end{proof}
We  now construct an $\omega$-model for $\WWKL+\neg P^{-}$,
proving clause (a) of Theorem \ref{rms6rS57S4}.
Let $P$ be a non-empty \pz class of reals
with a uniform bound on their randomness deficiency, and let $\hat{P}$ be a computable tree such that $[\hat{P}]=P$.
Let $x$ be a weakly 2-random real, consider the van Lambalgen array $T_x=\sqbrad{x_{1^i\ast 0}}{i\in \Nat}$
as this was defined in \S\ref{DnYwAzLsF},  let $z_n:=\oplus_{i\leq n}x_{1^i\ast 0}$ and let $\II$ be the model which is the union of all $\II_n:=\sqbrad{z}{z\leq_T z_n}$.
By Theorem \ref{G6z9r7C93e}, no $x\in \II$ computes a proper subtree of $\hat{P}$, and $\hat{P}\in \II_0\subseteq \II$.
Given positive tree $T\in\II$ there exists $n$ such that $T \in\II_n$ so by van Lambalgen's theorem, $z_{n+1}$ is random relative to $T$. 
By \eqref{Yx3uuoP6eA}, 
$z_{n+1}$  computes a path through $T$, so some path through $T$ is definable in $\II$.

Let $n$-$\WWKL$ be the axiom that every  $\emptyset^{(n)}$-computable positive tree has a path.
Noted that if  $n\geq 2$ and we start building the model using  an $n$-random real, a similar argument produces a model of 
$n$-$\WWKL +  \neg\PPm$.

\section{Avoiding  positive trees: proof of Theorem \ref{uZbmsMF7A6}}\label{Lhc6Zs8jLS} 
Let $\TT$ be the class of pruned infinite trees and $\TTast:=\cup_n \sqbrad{2^n\cap T}{T\in\TT}$, which we view as a set of finite trees 
(the trees obtained by taking the $\preceq$-downward closure).

We prove Theorem \ref{uZbmsMF7A6} (and, by the same argument, Corollary \ref{OYzd34gN1g}):
\begin{equation*}
\parbox{13cm}{Given $z$ and positive tree $P$ which has no $z$-\ce super-fat subset, 
every positive tree has a perfect subtree $G$ such that 
$P$ has no $(z \oplus G)$-\ce super-fat subset.}
\end{equation*}
Let $\mu_{\tau}$ denote the uniform measure relative to $\dbra{\tau}$: 
$\mu_{\tau}(E):= 2^{|\tau|}\cdot \mu(E\cap\dbra{\tau})$ for  measurable $E\subseteq\twome$.
Given $T\in\TT$, $F, F'\in\TTast$ we say that:
\begin{itemize}
\item {\em $\sigma$ is extendible in $T$} if $[T]\cap\dbra{\sigma}\neq\emptyset$
\item {\em $F$ is extendible in $T$} if each $\sigma\in F$ is extendible in $T$.
\item $F\preceq F'$ (prefix relation) if $F = \{\sigma \in 2^n: \sigma \text{ is a prefix of some } \tau \in F'\}$ for some $n$; 
 if in addition $F\neq F'$, we write $F\prec F'$.
\end{itemize}
We say that  {\em $F$  is a prefix of $T$} (or that $T$ is an extension of $F$), denoted by $F\prec T$, 
if $F = T \cap 2^n$, where $n$ is the height of $F$. 

\subsection{Forcing conditions and Turing functionals}
Given $\delta>0$ we say that $T\in\TT$ is a {\em $\delta$-robust extension} of $F \in\TTast$, denoted by  
$F\prec_\delta T$, if $F\prec T$ and 
$\forall \sigma\in F,\ \mu_{\sigma}([T])> \delta$.
We say that $T$ is a {\em robust extension} of $F$ if $\exists \delta>0,\ F\prec_{\delta} T$.

\begin{defi}\label{rgE7gtKH8R}
A {\em condition} is a pair $(F, T)$ where
$F\in\TTast$, $T\in \TT$ and $T$ is a robust extension of $F$.
If $F \prec_\delta T$ we say that $(F, T)$ is \emph{$\delta$-robust}.
We say that   
$S\in\TT$ \emph{meets} condition $(F,T)$ if $F \prec S \subseteq T$, and
 identify $(F,T)$ with the set of $S\in\TT$ that meet  $(F,T)$. 
We say that  $(F', T')$ \emph{extends}  $(F, T)$, and write $(F', T') \leq (F, T)$, if
$F \preceq F'$, $T' \subseteq T$, and every member of $F$ has two $\preceq$-incomparable
extensions in $F'$.
\end{defi}
The extension relation is a partial order on the set of conditions.
Any sufficiently generic chain of conditions defines a  perfect tree.
By the Lebesgue density theorem:
\begin{equation}\label{5gEofWIwIJ}
\textrm{$\forall \epsilon>0$\ \  every condition $(F, T)$ has a $(1-\epsilon/|F'|)$-robust extension.}
\end{equation}
For every positive tree $T$, $(\{\lambda\}, T)$ is a condition. 
Let $(\Phi^z_e)$ be an effective list of Turing functionals with oracle $z$, 
accepting oracle $T\in \TT$ and integer input $k\geq 0$, where access to the oracle is via gradual queries to its prefixes.
The output of a $\Phi^z_e(T;k)$-computation is a finite subset of $2^k$ of size $2^{k-e}$. We assume that
for each $T$ and  $(T\oplus z)$-\ce super-fat $E\subseteq\twomel$: 
\begin{equation}\label{t4qGJVpf7l}
\exists e\ \forall k\ \parlr{\Phi^z_e(T;k)\de\ \subseteq E\cap 2^k\ \wedge\ |\Phi^z_e(T;k)|=2^{k-e}}.
\end{equation}
For simplicity we omit $z$ from the expressions. Let
\[
 \Phi_e^{-1}(\sigma)[s] := \sqbrad{F\in\TTast}{\sigma\in \Phi_e(F; |\sigma|)[s] \de}
\]
and $\Phi_e^{-1}(\sigma) := \bigcup_s \Phi_e^{-1}(\sigma)[s]$.

\subsection{Hitting-sets and regularity}
In order to force the existence of the required perfect tree, we use the following notions.
\begin{defi}
We say that $V \subseteq \twomel$ is a \emph{hitting-set} for  $\mathcal{F}\subseteq\TTast$
if no $F \in \FF$ has an extension $T\in\TT$ with $[T]\subseteq 2^\omega - \dbra{V}$.
The \emph{hitting-cost} of $\mathcal{F}$ is 
\[
\cost{\FF} = \inf\sqbradb{\mu(V)}{V \text{ is a hitting-set for } \FF}.
\]
The {\em hitting-cost of $\mathcal{F}$ relative to $E\in\TTast$} is:
\[
\cost{\FF,E} = \inf \sqbradB{\max_{\rho\in E} \mu_{\rho}(V)}{V \text{ is a hitting-set for } \sqbrad{F\in\FF}{E\preceq F}}.
\]
 \end{defi}
Since $\Phi^{-1}(\sigma)[s]$ is increasing in $s$, 
$\cost{\Phi^{-1}(\sigma)[s], E}$ is also increasing in $s$, and bounded above by $1$.
Hence $\lim_s \cost{\Phi^{-1}(\sigma)[s], E}$ exists.
In the proof below we use the fact that for each $H\subseteq 2^{\leq n}$ there exists $H'\subseteq 2^n$ such that
$\dbra{H}=\dbra{H'}$.

\begin{lem}[Regularity]\label{csvmfhW6}
Given $F\in\TTast$,  $\delta = \lim_s \cost{\Phi_e^{-1}(\sigma)[s], F}$, $\epsilon > 0$:
\begin{enumerate}[\hspace{0.3cm}(a)]
\item there exists a hitting-set $H \subseteq 2^{<\omega}$ for $\Phi_e^{-1}(\sigma)$  with
$\forall \rho\in F, \ \mu_\rho(H) < \delta + \epsilon$.
\item  $\cost{\Phi_e^{-1}(\sigma), F} = \lim_s \cost{\Phi_e^{-1}(\sigma)[s], F}$.
\end{enumerate}
\end{lem}

\begin{proof}
Let $\mathcal{H}$ be the set of finite sequences $(V_0, V_1, \ldots, V_{t-1})$,
such that for each $i\leq j<t$:
\begin{itemize}
 \item $V_i \subseteq 2^i$ and $V_i$ is a hitting-set for $\Phi_e^{-1}(\sigma)[i]$
 \item $\dbra{V_i} \subseteq \dbra{V_{j}}$ and $\forall \rho\in F,\ \mu_\rho(V_i) \leq \delta$
\end{itemize}
Then $\mathcal{H}$ is a finitely branching tree, since $2^s$ only has  finitely many subsets.
By the hypothesis, for each $t>0$ there exists a hitting-set $U_t \subseteq 2^t$
for $\Phi_e^{-1}(\sigma)[t]$ with $\forall \rho\in F, \mu_{\rho}(U_t)\leq \delta$.
By monotonicity of $\Phi_e^{-1}(\sigma)[s]$, the set $U_t$ is also a
a hitting-set for $\Phi_e^{-1}(\sigma)[s]$, $s < t$.
Hence
\begin{equation*}
\parbox{12cm}{for each $t$, $\mathcal{H}$ contains a path $(V_0, V_1, \ldots, V_{t-1})$ of length $t$, with $V_{t-1}=U_t$}
\end{equation*}
so $\mathcal{H}$ is an infinite finite-branching tree
and by K\"{o}nig's lemma it has an infinite path $(V_i)$. Then
$V = \cup_s V_s$ is a hitting-set for $\Phi_e^{-1}(\sigma)$ with
$\forall\rho\in F,\ \mu_\rho(V) \leq \delta$, so  (a),  (b) hold.
\end{proof}

\subsection{Forcing convergence or divergence}
We show how to build chains of conditions that determine
a perfect tree $G$ satisfying the requirements of Theorem \ref{uZbmsMF7A6}.
The goal  is to either force $\Phi_e(G)$ to output a string outside $P$ (convergence), or force it to be partial (divergence).
Given $\delta>0$ and a $\delta$-robust condition,  if
\begin{equation}\label{rqNv8183qi}
\exists \epsilon>0\ \exists k_0\ \forall k>k_0:\ \left| \{\sigma \in 2^k: \cost{\Phi_e^{-1}(\sigma), F} > 1 - \delta \} \right| > \epsilon\cdot 2^k.
\end{equation}
we can extend $(F, T)$ in a way that forces $\Phi_e$ to output a string outside $P$. Indeed, it suffices to
find an extension $E\in\TTast$ of $F$ such that $\Phi_e(E;|\sigma|)\de\not\subseteq P$ 
and $\forall \rho\in E,\ \mu_{\rho}([T])>0$. By \eqref{rqNv8183qi}
there exists a $z$-\ce super-fat set of strings $\sigma$, such that the cost of hitting
all trees that $\Phi_e$-map to extensions of $\sigma$ is large. By our hypothesis about $z$, such a set cannot be
a subset of $P$, so there exists $\sigma\not\in P$ such that the hitting-cost of $\Phi_e^{-1}(\sigma)$ is large.
This, along with the robustness of $(F,T)$, allows us to find the required extension $E$ such that 
$\sigma\in\Phi_e(E;|\sigma|)$. 

\begin{lem}[Forcing convergence]\label{BWZl9FShCb}
Let $(F,T)$ be a $\delta$-robust condition such that
\[
\liminf_k 2^{-k}\cdot \left| \{\sigma \in 2^k: \cost{\Phi_e^{-1}(\sigma), F} > 1 - \delta \} \right| > 0.
\]
There exists $(F', T')< (F,T)$ with $\exists n\ \Phi_e(F'; n)\de \not\subseteq P$.
\end{lem}\begin{proof}
Fix $k_0, \epsilon>0$ such that
\[
\forall k>k_0:\ \left| \{\sigma \in 2^k: \cost{\Phi_e^{-1}(\sigma), F} > 1 - \delta \} \right| > \epsilon\cdot 2^k
\]
so the set $D:=\sqbradlr{\sigma}{|\sigma|> k_0\ \wedge\  \cost{\Phi_e^{-1}(\sigma), F} > 1 - \delta}$ is super-fat.

Recall that we have suppressed oracle $z$ in $\Phi_e$, so $\Phi_e^{-1}(\sigma)$ is $z$-\ce and 
$\cost{\Phi_e^{-1}(\sigma), F} > 1 - \delta$ is a $\Sigma^0_1(z)$ condition.
It follows that $D$ is a $z$-\ce set of strings.

By the hypothesis about $z$ we get $D \not\subseteq P$, so fix $\sigma \in D - P$ and $s$ with  
\begin{equation}\label{eq:S3-conv}
\cost{\Phi_e^{-1}(\sigma)[s], F}) > 1 - \delta.  
\end{equation}
It remains to show that there exists $E\succeq F, E\in \Phi_e^{-1}(\sigma)[s]$ 
with  $\forall \xi\in E,\ \mu_\xi([T]) > 0$.

Assuming otherwise, for each $E\succeq F, E \in \Phi_e^{-1}(\sigma)[s]$ 
we can pick $\xi_E \in E$, $\mu_{\xi_E}([T]) = 0$ so
\[
H:=\sqbrad{\xi_E}{E \in \Phi_e^{-1}(\sigma)[s]\ \wedge\ F \preceq E}
\]
is a hitting-set for $\sqbrad{G\in \Phi_e^{-1}(\sigma)[s]}{F\preceq G}$.
By the choice of $\xi_E$ and  robustness of $(F,T)$:
\[
\forall \tau\in F, \ \mu_\tau(\dbra{H}) \leq 1 - \delta
\hspace{0.3cm}\textrm{so}\hspace{0.3cm}
 \cost{\Phi_e^{-1}(\sigma)[s], F} \leq 1 - \delta
\]
which contradicts \eqref{eq:S3-conv} and concludes the proof.
\end{proof}

Next we explain how we can extend $(F, T)$ in order to force divergence of $\Phi_e$, 
in the case that the condition of Lemma \ref{BWZl9FShCb} does not hold.
For simplicity, assume that $F=\{\lambda\}$ and let $T$ be any positive tree, so we need to find
$k$ and a positive subtree $T'$ of $T$ such that $\Phi_e(S;k)\un$ for all subtrees $S$ of $T'$.
By the hypothesis, given any $\delta\in (0,1)$ we can find $k$ such that 
\begin{equation}\label{axrZbQYJVm}
\left| \{\sigma \in 2^k: \cost{\Phi_e^{-1}(\sigma), F}) > 1 - \delta \} \right| < 2^{k-e}.
\end{equation}
The intuition here is that given $x\in [T]$, we can consider the class of subtrees $S$ of $T$ containing $x$,
and then the set of $\sigma\in 2^k$ in the outputs $\Phi_e(S;k)$ that are defined. In this sense, each $x$ corresponds to
a set of $k$-bit strings of size at least $2^{k-e}$. Here it is crucial that for each $S$ there are less than
$2^{2^k}$ possibilities for the output of $\Phi_e(S;k)$.

The idea is to restrict $T$ sufficiently, to a subtree $T'$,  so that
every $x\in [T']$ corresponds to less than $2^{k-e}$ many $k$-bit strings, which means that  $\Phi_e(S;k)\un$ for any subtree $S$ of $T'$.
In \eqref{axrZbQYJVm} we may think that $1-\delta$ is sufficiently small, so for most $\sigma\in 2^k$ there exists a small open hitting-set 
for $\Phi_e^{-1}(\sigma)$.  The set of reals that belong to sufficiently many of these small hitting-sets has small measure, which can be removed
from $T$ by removing a sufficiently tight open cover of it. 
The remaining set contains paths of a positive subtree $T'$ of $T$.
This will be done in such a way that  the subtrees $S$ of $T'$ can only $\Phi_e$-map to less than $2^{k-e}$ many
$k$-bit strings -- otherwise
$S$ would contain a path in one of the hitting-sets that we have removed.

Below is the formal details of this argument.

\begin{lem}[Forcing Divergence]\label{5XvJQvkQxh}
Suppose that $\delta \geq 1 - 2^{-n-e-4}$ and  
\begin{itemize}
\item $(F,T)$ is a  $\delta$-robust condition with $|F|<2^n$ \vspace{0.1cm}
\item $\liminf_k 2^{-k} \left| \{\sigma \in 2^k: \cost{\Phi_e^{-1}(\sigma), F}) > 1 - \delta \} \right|= 0$.
\end{itemize}
There exists $k$ and  $(F', T')< (F, T)$
 such that $\Phi_e(S; k) \uparrow$ for all trees $S\in (F', T')$.
\end{lem}\begin{proof}
By the hypothesis we may fix $k$ such that
\begin{equation}\label{eq:S3-div-k}
 |C_k| \geq 2^k-2^{k-e-1}
 \hspace{0.3cm}\textrm{where}\hspace{0.3cm}
 C_k := \sqbrad{\sigma \in 2^k}{\cost{\Phi_e^{-1}(\sigma), F}  \leq 1 - \delta}.
\end{equation}
Fix $\rho\in F$ and for each $\sigma \in 2^k$ let
\begin{itemize}
\item $H_{\sigma}$ be a hitting-set for $\Phi_e^{-1}(\sigma)$ with $\forall \sigma\in C_k\ \forall \rho\in F,\ \mu_\rho(H_{\sigma}) < 2\cdot (1-\delta)$.
\item $M_{\rho} :=\sqbrad{x \in \dbra{\rho}}{|\sqbrad{\sigma \in C_k}{x \in \dbra{H_{\sigma}}}| > 2^{k-n-e-1}}$
and $M'_{\rho} :=\dbra{\rho}-M_{\rho}$.
\end{itemize}

Each $x\in M_{\rho}$ belongs to more than $2^{k-n-e-1}$ of the  $\dbra{H_{\sigma}}, \sigma \in C_k$, so
\[
\mu_\rho(M_{\rho}) \cdot 2^{k-n-e-1}< \sum_{\sigma \in C_k} \mu_\rho(H_{\sigma}).
\]
Since $\abs{C_k}\leq 2^k$ we have 
$\sum_{\sigma \in C_k} \mu_\rho(H_{\sigma}) < 2\cdot \abs{C_k} \cdot (1 - \delta)\leq 2^{k+1} \cdot (1 - \delta)$ so
\[
2^{k+1} \cdot (1 - \delta) > \mu_\rho(M_{\rho}) \cdot 2^{k-n-e-1}
\]
and since $\delta \geq 1 - 2^{-n-e-4}$ we get $\mu_\rho(M_{\rho}) <  1/4$.
Since $(F,T)$ is a  $\delta$-robust,
\begin{equation}\label{eq:S3-div-N}
\forall \rho\in F,\ \mu_\rho(M'_{\rho}\cap [T]) > 1/2.
\end{equation}
Each $x\in M'_{\rho}$ corresponds to  the set $\{\sigma \in C_k : x \in \dbra{H_{\sigma}}\}$ which has size 
$\leq 2^{k-n-e-1}$.

If we let $\DD$ be the class of subsets of $C_k$ of size $\leq 2^{k-n-e-1}$ then
\[
M'_{\rho}=\bigcup_{D\in \DD} M'_{\rho}(D)
\hspace{0.3cm}\textrm{where}\hspace{0.3cm}
M'_{\rho}(D):= \{x \in M'_{\rho} : D = \{\sigma \in C_k : x \in \dbra{H_{\sigma}}\}\}
\]
gives a partition of $M'_{\rho}$ into $<2^{2^k}$ classes. The above apply to each $\rho\in F$, so
\[
\forall \rho\in F\ \exists D_\rho\in \DD:\ \ 
\parb{\mu_\rho(M'_{\rho}(D_{\rho})\cap [T]) > 2^{-1-2^k}\wedga |D_{\rho}|\leq 2^{k-n-e-1}}.
\]
So there exists  an infinite pruned tree $T'\subseteq T$, $T'\succ F$ which is a robust extension of $F$ and
\[
[T'] \subseteq \bigcup_{\rho\in F} M'_{\rho}(D_{\rho}).
\]
We show that if $S\in (F, T')$  then $\Phi_e(S;k)\un$.
For a contradiction,  assume otherwise. We show
\begin{equation}\label{QnXMnFzaMQ}
\sigma \in \Phi_e(S; k)\cap C_k\impl 
\exists \rho \in F,\ \sigma \in D_\rho.
\end{equation}
Since $S\subseteq T'$ and $\sigma \in \Phi_e(S; k)\cap C_k$, 
the hitting set $H_{\sigma}$ for $\Phi^{-1}(\sigma)$ is defined.
Since $\sigma \in \Phi_e(S; k)$, $S$ must have a prefix in $\Phi^{-1}_e(\sigma)$ so there exists
$x\in [S]\cap \dbra{H_{\sigma}}\subseteq [T']\cap \dbra{H_{\sigma}}$.

So $\exists \rho\in F,\ x\in M'_{\rho}(D_{\rho})$. But $x\in M'_{\rho}(D_{\rho})$ means
that $D_{\rho}=\sqbrad{\tau\in 2^k}{x\in\dbra{H_{\tau}}}$. Thus
$\sigma \in D_\rho$ because $x\in\dbra{H_{\sigma}}$, concluding the proof of \eqref{QnXMnFzaMQ}, which in turn gives
\begin{equation*}
\Phi_e(S;k)\subseteq \parb{2^k-C_k}\cup \parb{\cup_{\rho\in F} D_{\rho}}.
\end{equation*}
Since $\forall \rho\in F,\ |D_{\rho}|\leq 2^{k-n-e-1}$ we get 
\[
\abs{\cup_{\rho\in F} D_{\rho}}\leq \sum_{\rho\in F} \abs{D_{\rho}}
< 2^n\cdot 2^{k-e-n-1} = 2^{k-e-1}
\]
However by \eqref{eq:S3-div-k} we have $|C_k| \geq 2^k-2^{k-e-1}$ so 
\[
|\Phi_e(S;k)| \leq
\absb{\parb{2^k-C_k}\cup \parb{\cup_{\rho\in F} D_{\rho}}} <
|2^k-C_k| +  2^{k-e-1}\leq 2^{k-e} 
\]
which contradicts assumption about the size of the output of  $\Phi_e$.

Finally let $F' = F$.
Condition $(F', T')$ satisfies the requirements of the theorem.
\end{proof}
\begin{lem}\label{2MaJElyZz}
For each $e, (F,T)$ there exists $(F',T') < (F,T)$ such that:
\[
\forall S\in (F', T'):\  \parB{\Phi_e(S; k) \uparrow\ \vee \ \Phi_e(S; k)\de \not\subseteq P}.
\]
\end{lem}\begin{proof}
This is a consequence of Lemma \ref{BWZl9FShCb} and  Lemma \ref{5XvJQvkQxh}, 
since \eqref{5gEofWIwIJ}  allows the extension to a condition with robustness arbitrarily close to 1.
\end{proof}
We  conclude the proof of Theorem \ref{uZbmsMF7A6}. Given a positive tree $\Tb$, we may start with
condition $(F_0, T_0)=(\{\lambda\},\Tb)$ and consider a chain $(F_{e+1}, T_{e+1})<(F_{e}, T_{e})$ such that for each $e$
condition $(F', T'):=(F_{e}, T_{e})$ satisfies the property of Lemma \ref{2MaJElyZz}.
The chain $(F_{e}, T_{e})$ defines a perfect subtree $G$ of $\Tb$ such that no $(G\oplus z)$-\ce subset of $P$ is super-fat.

This  construction also proves Corollary \ref{OYzd34gN1g}.
Up to now,  $P$ was a fixed positive tree which has no $z$-\ce super-fat subset and the functionals $\Phi_e$ had an implicit oracle $z$.
Fixing $z=\emptyset$ and 
\[
P_e=\sqbrad{x}{\forall n\ K(x\restr_n)\geq n-e}
\]
Lemma \ref{2MaJElyZz} gives that
for each $e, (F,T)$ there exists $(F',T') < (F,T)$ with
\[
\forall S\in (F', T'):\  \parB{\Phi_e(S; k) \uparrow\ \vee \ \Phi_e(S; k)\de \not\subseteq P_e}.
\]
By the padding lemma (every Turing functional has infinitely many indices)  the chain $(F_e, T_e)$ with respect to this modification of 
Lemma \ref{2MaJElyZz} proves Corollary \ref{OYzd34gN1g}: there exists a perfect subtree of $\Tb$ which cannot computably enumerate
any super-fat subset of  $P_e$, for any $e$, hence any super-fat set of finite deficiency.

\subsection{The model for (b) of Theorem \ref{rms6rS57S4}}
We define a model for $\PPs+\neg \PPp$,
proving clause (b) of  Theorem \ref{rms6rS57S4}.
Let $P$ be the set of extendible nodes of a computable positive tree $P_0$ with a
uniform upper bound on the randomness deficiency of its infinite paths.
We a  sequence $z_n\leq_T z_{n+1}$ of reals  and  consider 
\begin{equation*}
\parbox{10cm}{the model $\II$ which is the  union of all $\II_n:=\sqbrad{z}{z\leq_T z_n}$.}
\end{equation*}
Starting with $z_0=\emptyset$, 
there exists no $z_0$-\ce super-fat subset of $P$, so there exists no $z_0$-computable positive perfect subtree of $P_0$.
Assume inductively that $z_i, i\leq n$ have been defined,
$z_i\leq_T z_{i+1}$ for $i<n$, and there exists no $z_n$-\ce super-fat subset of $P_0$.
\begin{itemize}
\item By Theorem \ref{uZbmsMF7A6} let $T_n$ be a perfect pathwise-$z_n$-random tree 
such that no  $(T_n\oplus z_n)$-\ce subset of $P$ is super-fat;
\item let $z_{n+1}:=T_n\oplus z_n$, so there exists no $z_{n+1}$-computable positive subtree of $P_0$.
\end{itemize}
If $x\in\II$, there exists no positive $x$-computable subtree of $P_0$,  so $\II$ does not contain any positive
subtree of $P_0$ and $P_0\in \II$. On the other hand, if $T\in \II$ is a  positive tree, there exists $n$ such that
$T\leq_T z_n$. Since $T_n$ is strongly pathwise $z_n$-random and perfect, by Lemma \ref{nqjFhRoRlv} there exists 
a perfect $T_n$-computable subtree of $T$, hence  a perfect subtree of $T$ in  $\II_{n+1}$.

\section{Positive trees not computing PA: Theorem \ref{xJFhzQCXsu}}\label{36Hjxwj7uW}
We start with a concise proof of a theorem of Patey, 
reported by \citet{Chong.Li.ea:2019}.
\begin{thm}[Patey]\label{thm:Patey_non-PA-perfect-subtree}
Every positive tree contains a perfect subtree which does not compute a
complete extension of PA. 
\end{thm}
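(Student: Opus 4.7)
The plan is to adapt the perfect-tree forcing of Section \ref{5HjYsKnU5} to the task of avoiding complete extensions of Peano Arithmetic, while working inside the given positive tree $T_0$. First, fix once and for all a pair of c.e.\ sets $A_0, A_1$ which is computably inseparable and \emph{universal}, in the sense that every $\{0,1\}$-valued separator of the pair computes a complete extension of PA (such a pair arises from the standard diagonal construction). Forcing conditions will be pairs $(Q, J)$ as in Definition \ref{RXIFitnUNT}, with $Q \in \TTast$ a finite tree prefix and $J \subseteq \TT(T_0)$ a closed, subtree-closed, $Q$-robust class; a generic chain of such conditions will yield, in the style of Lemma \ref{9AYjlMH4YQ}, a perfect subtree $T \subseteq T_0$ against which the requirements are then verified.

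The heart of the argument will be a density lemma: for every condition $(Q, J)$ and every Turing functional $\Phi_e$, some extension $(Q', J') < (Q, J)$ forces $\Phi_e(T)$ not to be a $\{0,1\}$-valued separator of $A_0, A_1$. I would proceed by a trichotomy on the $\Sigma^0_1$ predicates $D_i(n)$, for $i \in \{0, 1\}$, which assert that some sufficiently robust extension of $(Q, J)$ forces $\Phi_e(T; n)\de = i$ for every $T$ in the extension; by compactness of $(Q, J)$ in $\TT$ these are c.e.\ uniformly in $n$. First, if at some stage one observes $n \in A_0$ together with $D_1(n)$, or symmetrically $n \in A_1$ together with $D_0(n)$, then the witnessing extension forces $\Phi_e(T; n)$ to a value that violates separation. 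Second, if for some $n$ neither $D_0(n)$ nor $D_1(n)$ ever holds, then a convergence-forcing argument modeled on Lemma \ref{GutUwM7S6A}, combined with the robustness amplification of Lemma \ref{cxQxHhe7oa}, yields an extension on which $\Phi_e(T; n)$ is undefined or takes a value outside $\{0, 1\}$ for every $T$ in the extension. Third, if neither of the previous cases ever occurs, then $D_0(n) \vee D_1(n)$ holds for every $n$ while $A_0 \cap \{n : D_1(n)\} = \emptyset$ and $A_1 \cap \{n : D_0(n)\} = \emptyset$; a race enumeration---placing $n$ into $S$ (resp.\ $S'$) as soon as $D_1(n)$ (resp.\ $D_0(n)$) is first confirmed---produces c.e.\ sets $S, S'$ with $S \cup S' = \omega$, $S \cap S' = \emptyset$, $A_1 \subseteq S$, and $A_0 \cap S = \emptyset$, rendering $S$ a computable separator of $A_0, A_1$ and contradicting their inseparability.

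The main obstacle will be the convergence-forcing step in the second case of the trichotomy: it is the PA-avoidance analogue of Lemma \ref{GutUwM7S6A} and demands careful bookkeeping of the robustness of $J$ through the string-hypergraph machinery of Section \ref{O1f2ngxwtX}, so that one can simultaneously shrink $J$ and maintain sufficient measure along every $\tau \in Q$ while excluding the convergence sets for both values $i \in \{0,1\}$. Once this density lemma is in hand, iterating it along the countable list $(\Phi_e)$ produces a descending chain of conditions whose limit, by the analogue of Lemma \ref{9AYjlMH4YQ}, is the desired perfect subtree of $T_0$ computing no complete extension of PA.
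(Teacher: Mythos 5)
Your proposal takes a different route from the paper, but it has genuine gaps. The paper does \emph{not} use the $(Q,J)$ forcing of \S\ref{5HjYsKnU5} for Theorem~\ref{thm:Patey_non-PA-perfect-subtree}; it uses a much lighter Mathias-style forcing with conditions $(F,T)$, $F\in\TTast$ a prefix of $T\in\TT$ with $\mu_\sigma([\hat T\cap T])>3/4$ at every leaf $\sigma$ of $F$, and it targets diagonal non-computability rather than a universal separating pair. The heart of the paper's argument is a partial function $f(n)$ defined \emph{universally over a fixed $\Pi^0_1$ class $\mathcal U$ of trees}, so that (i) $f$ is genuinely $\Sigma^0_1$ by compactness, and (ii) $f(n)\uparrow$ hands you two concrete trees $S_0,S_1\in\mathcal U$ which you can intersect with $\hat T\cap T$; the $3/4$ threshold guarantees the triple intersection still has measure $>1/4$ above every leaf, so there is a valid extension forcing divergence. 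Your setup lacks both ingredients.

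The concrete gaps. First, your predicate $D_i(n)$ --- ``some sufficiently robust extension of $(Q,J)$ forces $\Phi_e(T;n)\downarrow=i$'' --- is an \emph{existential quantification over conditions}, and verifying that a candidate $(D,J)$ is a condition requires witnessing $D$-robustness of $J$, which is a $\Sigma^0_2$-type assertion about the closed class $J$ (the class is not presented effectively, and even when $\Pi^0_1(z)$, robustness is existential over its members). The appeal to ``compactness of $(Q,J)$ in $\TT$'' would make a \emph{universal} quantification over $(Q,J)$ effective, not an existential quantification over extensions. Without $\Sigma^0_1$-ness of $D_i(n)$, the race enumeration in your third case does not yield a pair of c.e.\ sets, and the contradiction with computable inseparability evaporates. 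Second, in your Case~2 the negation of $D_0(n)\wedge D_1(n)$, as you defined it, asserts only the \emph{non-existence of forcing extensions} of a certain kind; it does not hand you any concrete trees $S_0,S_1$ to intersect, which is what makes the paper's divergence step go. Pointing at Lemma~\ref{GutUwM7S6A} as a model does not help: that lemma's hypothesis \eqref{8qknW6pP4K} is a combinatorial condition on hitting-costs of a tree-valued functional, and its proof (the partition of $M'_\tau$ by $\DD_k$, the $2^{2^k}$ counting, etc.)\ is tied to the edges being \emph{finite trees} of exponential width; none of that machinery applies when $\Phi_e(T;n)$ is a single bit. Forcing divergence for a bit-valued functional is much easier than Lemma~\ref{GutUwM7S6A} and is exactly what the paper's $S_0\cap S_1$ trick accomplishes --- but to get to that trick you need the universally-quantified $f$ (or $D_i$), not the existentially-quantified one.

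What does work, and what you would need to change. If you replace your $D_i(n)$ with ``for every $W\in J$ that witnesses $(Q,q)$-robustness, there exists $D\in\TTast$ with $Q\preceq D\subset W$ and $\Phi_e(D;n)\downarrow=i$'' (for a fixed rational $q$ close to $1$, obtained via Lemma~\ref{cxQxHhe7oa}), then $D_i(n)$ becomes $\Sigma^0_1$ by compactness, and $\neg D_i(n)$ produces a witness tree $W_i$; intersecting $W_0\cap W_1$ with a third witness in $J$ leaves enough measure to extend, paralleling the paper. With that repair your trichotomy and the use of a universal separating pair are a valid alternative to the paper's DNC dichotomy --- trading one standard characterization of $\PA$-degrees for another. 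But as written, the existential $D_i$, the claimed $\Sigma^0_1$-ness, and the gesture toward Lemma~\ref{GutUwM7S6A} all fail, and the heavy $(Q,J)$ machinery buys you nothing that the paper's simple $(F,T)$ forcing does not already provide more transparently.
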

This argument is given in \S\ref{GKlTOZxwFw} and then extended
in \S\ref{VhZ6RXMZ6v}, with aggressive use of Lebesgue density,
to a proof that there exists a positive perfect 
pathwise-random tree  which does not compute any complete extension of Peano Arithmetic.
Both arguments are inductive forcing constructions with conditions that are essentially closed
sets of reals. It is an an adaptation version of what is known as {\em Mathias forcing} to the space of trees. 

A total function $f$ is diagonally non-computable if 
 $\Phi_n(n) \downarrow\ \Rightarrow\ f(n) \neq \Phi_n(n)$, 
 where $(\Phi_n)$ is an effective enumeration of all Turing functionals, 
 here used without an oracle.
Recall that a real is $PA$ iff it computes a diagonally non-computable function which takes binary values
\citep[Theorem 6.9]{002945272010009}.
\subsection{Patey's theorem}\label{GKlTOZxwFw}
For the proof of Theorem \ref{thm:Patey_non-PA-perfect-subtree},
fix a positive tree $\hat{T}$.
We use the forcing notion in \S 3:
by Lebesgue density $(\{\lambda\}, \hat{T})$ is a condition.
We prove that if $(F_n,T_n)$ is a sufficiently generic sequence of conditions
and $T_0 = \hat{T}$ then 
$G = \cup_n F_n$ is a  perfect subtree of $\hat{T}$ of non-$PA$ degree.
\begin{lem}[Density of forcing conditions]\label{lem:Patey-non-dnc}
For each Turing functional $\Psi$, 
 there are densely many conditions forcing that $\Psi(G)$ is not a binary diagonally non-computable function.
\end{lem}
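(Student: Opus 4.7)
The plan is a standard recursion-theorem forcing argument of the sort that appears throughout the PA-avoidance literature. Fix a condition $(F,T)$ and a functional $\Psi$. Using the recursion theorem, I first choose an index $e$ such that $\Phi_e(e)$ is computed by the following enumeration: search through pairs $(F^\ast,v)$ where $F \preceq F^\ast \in \TTast$ with $F^\ast \subseteq T$, $v\in\{0,1\}$, and $\Psi(F^\ast;e)\de=v$ using only oracle queries decidable from $F^\ast$ and $T$ (queries $\sigma\in F^\ast$ return $1$; queries $\sigma\notin T$ return $0$). If such a pair is ever found, output $v$. Since $F$, $T$, $\Psi$ are parameters, the partial computable function of $e$ so described admits a self-referential index by the recursion theorem.

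I then split into two cases. In case (A) the search for $n=e$ succeeds, yielding a witness $(F^\ast, v)$ with $v=\Phi_e(e)$. I produce an extension $(F',T')$ of $(F,T)$ with $F^\ast \preceq F'$ where each leaf of $F^\ast$ is split into two extensions inside $T$, and $T'$ is the corresponding subtree of $T$ restricted to the extensions of $F'$. By construction every $G$ compatible with $(F',T')$ satisfies $\Psi(G;e)=v=\Phi_e(e)$, so the generic $G$ witnesses that $\Psi(G)$ is not a binary DNC function. In case (B) the search never halts, so $\Phi_e(e)\un$; I take any splitting extension $(F',T')$ of $(F,T)$ and claim that it already forces $\Psi(G;e)\notin\{0,1\}$. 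For if some further extension decided $\Psi(G;e)=v\in\{0,1\}$, the required computation would be a finite witness of the form $(F^\ast,v)$ that the search would have produced, contradicting its divergence. Consequently the generic $G$ refining $(F',T')$ has $\Psi(G;e)$ either divergent or strictly greater than $1$, and $\Psi(G)$ again fails to be binary DNC.

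I expect the main technical step to be verifying that the extension constructed in case (A) really is a condition, i.e.\ that $\mu_\sigma([\hat T\cap T'])>3/4$ holds at every new leaf $\sigma$ of $F'$. The computation $\Psi(F^\ast;e)=v$ imposes only finitely many local constraints on $T'$ (fixing membership for finitely many strings), and the splitting step requires finding, below every leaf $\tau$ of $F^\ast$, two extensions along which the conditional measure of $[\hat T\cap T]$ stays above $3/4$. Both are handled by the Lebesgue density theorem: since $\mu_\tau([\hat T\cap T])>3/4$, almost every extension of $\tau$ inside $\hat T\cap T$ has local density tending to $1$, so one can push the new leaves deep enough to absorb the finite perturbation from the fixed computation while keeping every leaf's measure strictly above $3/4$. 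Once this is in place, the chain of conditions produced by iterating this lemma (together with the analogous, straightforward density of splitting extensions) yields the desired perfect, non-$PA$ subtree of $\hat T$.
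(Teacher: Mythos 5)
There is a genuine gap. The search you describe is not a computable process: it consults the infinite tree $T$ as an oracle, both to test $F^\ast\subseteq T$ and to answer queries ``$\sigma\notin T$''. Consequently the function you get a fixed point for is $T$-partial computable, and the recursion theorem (relativized to $T$) gives an index $e$ with $\Phi^T_e(e)$ running your search, not $\Phi_e(e)$. But ``binary diagonally non-computable'' means $f(n)\neq\Phi_n(n)$ for every $n$ with respect to the plain, oracle-free enumeration $(\Phi_n)$. So in your case (A) the extension forces $\Psi(G;e)=\Phi^T_e(e)$, which need not equal $\Phi_e(e)$, and the intended conclusion that $\Psi(G)$ is not binary DNC does not follow. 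This cannot be waved away by assuming $T$ computable, since the trees arising in later conditions are obtained by compactness from $\Pi^0_1$ classes and can be arbitrarily complex.

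The paper sidesteps exactly this obstacle by defining the diagonalizing partial function over a \emph{computable} class of candidate trees rather than over the specific $T$ at hand: it sets $\mathcal{U}$ to be the class of all trees $S$ with $\mu_\sigma([S])>3/4$ at every leaf $\sigma$ of $F$ (which depends only on the finite $F$), and defines $f(n)$ via a universal quantifier ``for every $S\in\mathcal{U}$ there is $E$ with $F\preceq E\subset S$ and $\Psi(E;n)\downarrow=i$''. By compactness of $\mathcal{U}$ this is $\Sigma^0_1$, hence $f$ is partial computable with no oracle. The tree $\{\sigma\in\hat T\cap T:\mu_\sigma([\hat T\cap T])>0\}$ is then just one member of $\mathcal{U}$, invoked when $f(n)\downarrow=\Phi_n(n)$, while in the divergence case one picks $S_0,S_1\in\mathcal{U}$ witnessing that neither output value works and intersects them with $\hat T\cap T$ to build a divergence-forcing condition. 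Your case (B) lacks this mechanism and is independently problematic: with the query restriction as stated, a further extension may decide $\Psi(G;e)$ via a computation that queries strings in $T$ outside $F^\ast$, which your search would never report; and without the restriction, a witness $(F^\ast,v)$ found in case (A) need not propagate to all refinements $G$. The universal quantification over the computable class $\mathcal{U}$ is the key structural idea missing from your proposal.
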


\begin{proof}
Assume that  $\Psi$ only takes binary values and $(F, T)$ is a condition.
By Lebesgue density, we may assume that $(F,T)$ is $3/4$-robust.
Define $\mathcal{U}$ to be the collection of trees $S$ such that
$\forall \sigma\in F,\ \mu_{\sigma}\big([S]\big) \geq 3/4$, and let $\tuple{}:\Nat\times\Nat\to\Nat$ be a computable bijection.
For each $n$ search for the least $\tuple{s,i}$ such that 
\begin{equation*}
\forall S \in \mathcal{U} \ \exists E \in \TTast \ 
\parB{F \preceq E \subset S \wedge \Psi_s(E; n) \downarrow = i}
\end{equation*}
and let $f(n) = i$; if the search does not halt, let $f(n)$ be undefined.  
Since $\mathcal{U}$ is effectively closed, $f$ is a partial $\Sigma^0_1$ function. So 
$f$ is not diagonally non-computable. 

If $f$ is total then $\exists n \ f(n)  = \Phi_n(n) \downarrow$. 
Let $S$ be the set of $\sigma$ such that $\mu_\sigma([T]) > 0$.
Then $S \in \mathcal{U}$ and $\mu_\sigma([S]) > 0$ for every $\sigma \in S$.
By the definition of $f$, 
 $S$ contains some $E \in \TTast$ 
 such that  $F \preceq E$ and $\Psi(E; n) \downarrow = f(n)$. 
So $(E,S)$ is an extension of $(F,T)$ forcing $\Psi(G; n) \downarrow = \Phi_n(n)$.

If $f$ is undefined on some $n$, fix such an $n$ 
and pick $S_0, S_1 \in \mathcal{U}$ such that
\begin{equation}\label{eq:forcing-non-PA-d}
\forall i < 2,\ \forall E \in \TTast:\  
\Big(F \preceq E \subset S_i \Rightarrow \Psi(E; n)\not\simeq i\Big).
\end{equation}
Let $S := T \cap S_0 \cap S_1$. 
By the definition of $\mathcal{U}$ and the fact that $S_0, S_1 \in \mathcal{U}$,
 we get  $\mu_{\sigma}([S]) > 2^{-2}$ for each leaf $\sigma$  of $F$. 
So $(F,S)$ is a condition extending $(F,T)$.
If $P$ is a perfect tree meeting $(F,S)$, by
\eqref{eq:forcing-non-PA-d} and the fact that $\Psi$ has binary values we get
$\Psi(P; n) \uparrow$. 
Hence $(F,S)$ forces that $\Psi(G; n) \uparrow$, which concludes the proof.
\end{proof}
Every condition $(F,T)$ can be extended to some $(E,T)$
 such that every element of $F$ has incomparable extensions in $E$.
This observation and Lemma \ref{lem:Patey-non-dnc} show that there exists a generic sequence of conditions 
which produces a tree satisfying the requirements of Theorem \ref{thm:Patey_non-PA-perfect-subtree}.
In relativized form, this argument shows that if $z$ is not $PA$ then 
every positive tree contains a perfect subtree $T$ such that $z\oplus T$ is not $PA$. 
The latter statement can be used inductively
for the construction of a model of $\RCA+\PPs+\neg\WKL$.

\subsection{Proof of Theorem \ref{xJFhzQCXsu}}\label{VhZ6RXMZ6v}
We extend the argument of \S\ref{GKlTOZxwFw} toward a proof of Theorem \ref{xJFhzQCXsu}:
given any  positive tree $T$ and $x\not\leq_T\emptyset$ 
there exists a  positive perfect subtree $P$ of $T$ which
does not have $PA$ degree and does not compute $x$.
A tree $T$ is  {\em strongly positive} if $\forall\sigma\in T,\ \mu_\sigma([T]) > 0$.

Without loss of generality we fix a strongly positive tree $T$
and construct the desired subtree $P$ of $T$ by forcing.
We control the density of $P$ above certain nodes
via positive rational functions whose domains are finite trees. 
We call such functions {\em lower density functions}, let
$\LDF$ denote their collection and
let $\tuple{d}$ denote the domain of $d \in \LDF$. 
If $d\in \LDF$ and $\epsilon>0$ is a rational,  
let $d+\epsilon$ denote the element of $\LDF$ with 
$\forall \sigma \in \tuple{d},\ (d+\epsilon)(\sigma) = d(\sigma) + \epsilon$.

\begin{defi}\label{def:dense-ext}
Given $d \in \LDF$ we say that
$E \in \TTast$ is \emph{weakly $d$-dense} if $\tuple{d} \preceq E$ and
\begin{equation*}
d(\sigma)\cdot 2^{-|\sigma|} \leq \sum_{\sigma \preceq \tau \in E} 2^{-|\tau|} 
\hspace{0.5cm}\textrm{for all $\sigma \in \tuple{d}$.}
\end{equation*}
We say $E$ is \emph{$d$-dense} if it is weakly $(d+\epsilon)$-dense for a rational $\epsilon>0$.
An infinite tree $T$ is \emph{weakly $d$-dense}
 if $\tuple{d} \prec T$ and $T \cap 2^l$ is weakly $d$-dense for  sufficiently large $l$; equivalently:
\begin{equation*}
\tuple{d} \prec T
\hspace{0.5cm}\textrm{and}\hspace{0.5cm}
 \forall \sigma \in \tuple{d},  d(\sigma) \leq \mu_\sigma([T]). 
\end{equation*}
We say that $T$ is \emph{$d$-dense} if $T$ is weakly $(d+\epsilon)$-dense for some positive rational $\epsilon$.
\end{defi}

We define a relation on $\LDF$ which preserves the above density property.
Given $d, e\in \LDF$ we say that 
 {\em $e$ extends $d$} and denote it by $e \leq d$,
 if $\tuple{d} \preceq \tuple{e}$ and 
\[
\forall \sigma \in \tuple{d},\  d(\sigma) \cdot 2^{-|\sigma|} \leq \sum_{\sigma \preceq \tau \in \tuple{e}} e(\tau) \cdot 2^{-|\tau|}.
\]
Then if $e$ extends $d$,
every (weakly) $e$-dense $F \in \TTast$ or $T \in \TT$ is (weakly) $d$-dense.
Also $d+\epsilon$ extends $d$, where $\epsilon$ is any positive rational.

\begin{lem}\label{lem:dense-ext}
Let $d\in \LDF$.
For every $E \in \TTast$ and every strongly positive and $d$-dense tree $T$ 
 with $\tuple{d} \preceq E \prec T$  
 there exists  $e \in  \LDF$ such that $E = \tuple{e}$, $e \leq d$ and $T$ is $e$-dense.
\end{lem}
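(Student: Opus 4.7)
The plan is to define $e$ on $E$ by choosing each value $e(\tau)$ as a rational approximation from below to $\mu_\tau([T])$, tight enough so that a single positive rational slack is preserved across all $\sigma \in \tuple{d}$. Strong positivity of $T$ gives $\mu_\tau([T]) > 0$ for every $\tau \in E$, and that is exactly what makes such rational approximations available.

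First I would extract from the hypothesis that $T$ is $d$-dense a positive rational $\epsilon_0$ with $d(\sigma) + \epsilon_0 \leq \mu_\sigma([T])$ for every $\sigma \in \tuple{d}$. Then, for each $\tau \in E$, I would pick a rational $e(\tau)$ lying in the interval $[\max\{0,\,\mu_\tau([T]) - \epsilon_0/4\},\, \mu_\tau([T]))$, which is nonempty by strong positivity of $T$ and density of $\mathbb{Q}$. Set $\tuple{e} := E$, so by construction $e \in \LDF$ and $\tuple{d} \preceq \tuple{e}$.

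The $e$-density of $T$ is then immediate: since $E$ is finite and $e(\tau) < \mu_\tau([T])$ for every $\tau$, any positive rational $\epsilon_1 < \min_{\tau \in E}\bigl(\mu_\tau([T]) - e(\tau)\bigr)$ witnesses that $T$ is weakly $(e + \epsilon_1)$-dense, hence $e$-dense in the sense of Definition \ref{def:dense-ext}. For the inequality $e \leq d$, fix $\sigma \in \tuple{d}$; using that $E = T \cap 2^\ell$ for some $\ell$ and that $T$ is pruned, one has
\[
\mu_\sigma([T]) \cdot 2^{-|\sigma|} \;=\; \sum_{\sigma \preceq \tau \in E} \mu_\tau([T]) \cdot 2^{-|\tau|}.
\]
Combining the bound $e(\tau) \geq \mu_\tau([T]) - \epsilon_0/4$ with $\sum_{\sigma \preceq \tau \in E} 2^{-|\tau|} \leq 2^{-|\sigma|}$ yields
\[
\sum_{\sigma \preceq \tau \in E} e(\tau) \cdot 2^{-|\tau|} \;\geq\; \mu_\sigma([T]) \cdot 2^{-|\sigma|} - (\epsilon_0/4)\cdot 2^{-|\sigma|} \;\geq\; (d(\sigma) + 3\epsilon_0/4)\cdot 2^{-|\sigma|},
\]
which is strictly stronger than the requirement $d(\sigma)\cdot 2^{-|\sigma|} \leq \sum_{\sigma \preceq \tau \in E} e(\tau)\cdot 2^{-|\tau|}$ in the definition of $e \leq d$.

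No step appears genuinely delicate. The only small care-point is that $e(\tau)$ must be non-negative when $\mu_\tau([T]) < \epsilon_0/4$, which is why the interval of choice is written with a $\max$; the inequality $e(\tau) \geq \mu_\tau([T]) - \epsilon_0/4$ survives trivially in that case. Everything else is bookkeeping around two slack budgets: $\epsilon_0/4$ absorbed into the rational approximation of $\mu_\tau([T])$ by $e(\tau)$, and the remaining $3\epsilon_0/4$ held in reserve to certify $e \leq d$.
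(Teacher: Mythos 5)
Your proof is correct and follows essentially the same route as the paper: extract a positive slack $\epsilon$ from the $d$-density of $T$, choose each $e(\tau)$ as a rational lower approximation of $\mu_\tau([T])$ within that slack, and verify $e \leq d$ via the identity $\mu_\sigma([T])\cdot 2^{-|\sigma|} = \sum_{\sigma\preceq\tau\in E}\mu_\tau([T])\cdot 2^{-|\tau|}$. The only cosmetic difference is that you split the slack as $\epsilon_0/4$ versus $3\epsilon_0/4$ and add the $\max\{0,\cdot\}$ clamp, whereas the paper spends the entire $\epsilon$ on the approximation and lets $\sum 2^{|\sigma|-|\tau|}\leq 1$ absorb it.
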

\begin{proof}
Let $\epsilon := \min\sqbrad{\mu_{\sigma}([T]) - d(\sigma)}{\sigma\in \tuple{d}} > 0$.

For each $\tau \in E$, let $e(\tau)$ be a rational such that
$e(\tau) < \mu_{\tau}([T]) < e(\tau) + \epsilon$.

It is clear that $e \in \LDF$ and $T$ is $e$-dense.
If $\sigma\in \tuple{d}$ then
\[
\mu_\sigma([T]) - \epsilon 
= \sum_{\sigma \preceq \tau \in E} \mu_\tau([T]) \cdot 2^{|\sigma| - |\tau|} - \epsilon
< \sum_{\sigma \preceq \tau \in E} (e(\tau) + \epsilon) \cdot 2^{|\sigma| - |\tau|} - \epsilon
\leq \sum_{\sigma \preceq \tau \in E} e(\tau) \cdot 2^{|\sigma| - |\tau|}
\]
and since $d(\sigma) \leq \mu_\sigma([T]) - \epsilon$ we get $e \leq d$.
\end{proof}

\begin{defi}\label{def:ppst-nPA_forcing}
\emph{A forcing condition} is a triple $p = (F_p, T_p, d_p)$ such that
\begin{itemize}
\item $d_p\in \LDF$ with domain $F_p \neq \emptyset$,
\item $T_p$ is an infinite $d_p$-dense and strongly positive tree.
\end{itemize}
Condition $p$ represents the set
$[p] := \{S \subseteq T_p: S \text{ is a strongly positive $d_p$-dense tree}\}$.

We say that $q$ {\em extends}  $p$ and denote it 
by $q \leq p$, if  $d_q \leq d_p$ and $T_q \subseteq T_p$. 
\end{defi}

Then $\leq$ is a partial ordering on conditions and $q \leq p\Rightarrow [q] \subseteq [p]$.
In order to see that the set of conditions is non-empty, 
 note that positive trees always have strongly positive subtrees.
So we may assume that the given tree $T$ is strongly positive.
If $F$ is the finite tree consisting of the empty string $\lambda$  and $d$ is a 
rational function with $F = \tuple{d}$, $d(\lambda)<\mu([T])$, then $(F,T,d)$ is a condition.
We show that  each condition $p$ has an extension $q$ such that the true 
density of $T_q$ is much closer to $1$ than the density specified by $d_q$.

\begin{lem}[Condensation]\label{lem:ppst-nPA_condensation}
For every $n$ and  condition $p$, there exists $q$ such that $q \leq p$ and
\[
\max\left\{ 1 - \mu_{\tau}([T_q]): \tau \in F_q \right\} < 
\frac{1}{n+1}\cdot \min\sqbrad{1 - d_q(\tau)}{\tau \in F_q}.
\]
\end{lem}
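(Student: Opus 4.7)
The plan is to use the Lebesgue density theorem to refine $F_p$ to a common-length antichain $F_q\subset T_p$ whose strings each have density in $[T_p]$ close to $1$, and to assign every $\tau\in F_q$ a single rational value $a\in(0,1)$, chosen large enough to force $d_q\leq d_p$ yet small enough that $(1-a)/(n+1)$ comfortably exceeds the density tolerance at every new leaf.

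To that end I first fix parameters. Since $T_p$ is $d_p$-dense, there is a positive rational $\epsilon_0$ with $\mu_\sigma([T_p])\geq d_p(\sigma)+\epsilon_0$ for every $\sigma\in F_p$. Because each ratio $d_p(\sigma)/(d_p(\sigma)+\epsilon_0/2)$ is strictly less than $1$ and $F_p$ is finite, I pick a rational $a\in(0,1)$ with $a\parb{d_p(\sigma)+\epsilon_0/2}>d_p(\sigma)$ for every $\sigma\in F_p$, followed by a positive rational $\delta<(1-a)/(n+1)$. Lebesgue's density theorem applied to $[T_p]$ then guarantees that, for $\ell$ sufficiently large and uniformly in $\sigma\in F_p$, the set
\[
A_\sigma:=\sqbrad{\tau\in T_p\cap 2^\ell}{\sigma\preceq\tau\ \wedge\ \mu_\tau([T_p])>1-\delta}
\]
satisfies $\sum_{\tau\in A_\sigma}2^{|\sigma|-|\tau|}>d_p(\sigma)+\epsilon_0/2$. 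Fixing such an $\ell$, I set $F_q:=\bigcup_{\sigma\in F_p}A_\sigma$, $d_q(\tau):=a$ for every $\tau\in F_q$, and take $T_q$ to consist of the prefixes of $F_q$ together with all extensions of members of $F_q$ inside $T_p$; since $T_p$ is strongly positive, so is $T_q$. Then $T_q\cap 2^\ell=F_q$ and hence $F_q\prec T_q$; for every $\tau\in F_q$ one has $\mu_\tau([T_q])=\mu_\tau([T_p])>1-\delta>a$, so $T_q$ is $d_q$-dense; and the chain
\[
\sum_{\sigma\preceq\tau\in F_q}a\cdot 2^{-|\tau|}=a\cdot 2^{-|\sigma|}\sum_{\tau\in A_\sigma}2^{|\sigma|-|\tau|}>a\cdot 2^{-|\sigma|}\parb{d_p(\sigma)+\epsilon_0/2}>d_p(\sigma)\cdot 2^{-|\sigma|}
\]
delivers $d_q\leq d_p$, so $q:=(F_q,T_q,d_q)$ is a condition extending $p$. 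The gap estimate is then immediate:
\[
\max_{\tau\in F_q}\parb{1-\mu_\tau([T_q])}<\delta<\frac{1-a}{n+1}=\frac{1}{n+1}\min_{\tau\in F_q}\parb{1-d_q(\tau)}.
\]

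The only delicate point is the order in which the parameters are fixed: the slack $\epsilon_0$ is dictated by the hypothesis that $T_p$ is $d_p$-dense; only then can $a<1$ be chosen close enough to $1$ to absorb $d_p(\sigma)/(d_p(\sigma)+\epsilon_0/2)$ uniformly over $F_p$, yet far enough from $1$ for $(1-a)/(n+1)$ to remain a usable tolerance; only then can $\delta$ and the common level $\ell$ be picked so that Lebesgue density simultaneously produces enough mass above every $\sigma\in F_p$ and high local density at every new leaf. Once this order is respected, nothing beyond routine bookkeeping is required.
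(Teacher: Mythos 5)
Your proof is correct and follows essentially the same route as the paper's: fix a slack parameter from the hypothesis that $T_p$ is $d_p$-dense, use the Lebesgue density theorem to pass to a level $\ell$ where most of the mass above each $\sigma\in F_p$ is concentrated on strings of local density close to $1$, assign those new leaves a single constant value, and read off the two required inequalities. Your $a$ plays the role of the paper's $1-\delta$ and your $\delta$ plays the role of the paper's $\epsilon=\delta/(n+2)$; the parametrization differs but the content is the same, and the bookkeeping you give for $d_q\leq d_p$ and for $d_q$-density of $T_q$ is sound.
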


\begin{proof}
Let $l_p$ be the length of strings in $F_p$.
Since $T_p$ is $d_p$-dense, fix  rational $\delta>0$ with
\begin{equation}\label{hLW5EmDHPO}
\forall \sigma \in F_p,\  (1-\delta)\cdot \mu_\sigma([T_p])  > d_p(\sigma)
\end{equation}
and $\epsilon := \delta/(n+2)$.
By  Lebesgue density 
$\lim_{\sigma\prec x} \mu_\sigma([T_p])=1$ for almost all $x\in [T_p]$.

Combining this with \eqref{hLW5EmDHPO}  and the fact that $F_p$ is finite, 
 there exist $l_q \geq l_p$, $F_q \in \TTast$ with
\begin{enumerate}[(i)]
\item $F_p \preceq F_q \subset T_p \cap 2^{l_q}$\ \  \ and\ \ \  $\forall \tau \in F_q,\ \mu_{\tau}([T_p]) > 1-\epsilon$
\item   $\forall \sigma \in F_p,\ (1-\delta) \cdot |\{\tau \in F_q: \sigma \preceq \tau\}| \cdot 2^{-l_q} > d_p(\sigma) \cdot 2^{-l_p}$
\end{enumerate}
Let $d_q \in \LDF$ with  $\forall \tau \in F_q,\ d_q(\tau) = 1-\delta$.

By (ii), $d_q$ extends $d_p$. 
Let $T_q := \{\rho \in T_p: \rho \text{ is comparable with some } \sigma\in F_q\}$.
If $\tau \in F_q$, 
\[
  \mu_\tau([T_q]) = \mu_\tau([T_p]) > 1 - \epsilon > 1 - \frac{1}{n+1} \cdot \delta > 1 - \delta = d_q(\tau).  
\]
So $T_q$ is $d_q$-dense, and $q = (F_q, T_q, d_q)$ is a condition as desired.
\end{proof}

\begin{lem}[Branching]\label{lem:ppst-nPA_branching}
Every condition $p$ can be extended to some $q$ such that 
every node in $F_p$ has at least two incomparable extensions in $F_q$.
\end{lem}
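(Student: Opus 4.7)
The plan is to keep $T_q:=T_p$ unchanged and simply push the stem $F_p$ deeper inside $T_p$ until the splitting above each of its leaves becomes visible; the matching lower density function $d_q$ would then be produced by Lemma \ref{lem:dense-ext}.

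For the first step, I would exploit strong positivity of $T_p$: for each leaf $\sigma$ of $F_p$ we have $\mu_{\sigma}([T_p])>0$, so $\sigma$ cannot have a unique path through $T_p$ extending it, which means that for some finite level $l_{\sigma}$ there are at least two distinct $\tau\in T_p\cap 2^{l_{\sigma}}$ extending $\sigma$. Since $F_p$ is finite and non-empty, I would set $l:=\max\sqbrad{l_{\sigma}}{\sigma\in\leafb{F_p}}$ and $F_q:=T_p\cap 2^l$; then $F_p\preceq F_q\prec T_p$, and every leaf of $F_p$ would have at least two incomparable extensions in $F_q$.

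For the second step I would invoke Lemma \ref{lem:dense-ext} with $E:=F_q$, $d:=d_p$ and $T:=T_p$, whose hypotheses all hold because $T_p$ is strongly positive and $d_p$-dense with $F_p=\tuple{d_p}\preceq F_q\prec T_p$. This would deliver some $d_q\in\LDF$ with $\tuple{d_q}=F_q$, $d_q\leq d_p$, and $T_p$ being $d_q$-dense. Setting $T_q:=T_p$, the triple $q:=(F_q,T_q,d_q)$ is then a forcing condition ($F_q\neq\emptyset$ and $T_q$ is infinite, strongly positive and $d_q$-dense) with $q\leq p$ (since $d_q\leq d_p$ and $T_q\subseteq T_p$), and by construction every node of $F_p$ has two incomparable extensions in $F_q$, as required.

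The only real subtlety is to notice that strong positivity of $T_p$ already produces incomparable extensions at every sufficiently deep level above each leaf of $F_p$, so no Lebesgue-density manipulation in the spirit of the proof of Lemma \ref{lem:ppst-nPA_condensation} is required to manufacture the splits; Lemma \ref{lem:dense-ext} then repackages $d_p$ into the larger-domain $d_q$ without needing to modify $T_p$ at all.
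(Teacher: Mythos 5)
Your proposal is correct and matches the paper's proof essentially step for step: both use strong positivity of $T_p$ to find a level $l$ at which every node of $F_p$ acquires two incomparable extensions, then invoke Lemma \ref{lem:dense-ext} to repackage $d_p$ into an extension with domain $T_p \cap 2^l$ while leaving $T_p$ itself unchanged. The only difference is that you spell out the verification that $q$ is a condition, which the paper leaves implicit.
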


\begin{proof}
Since $\forall \sigma \in F_p,\ \mu_{\sigma}([T_p]) > 0$ there exists $l$ so that every 
$\sigma \in F_p$ has at least two incomparable extensions in $T_p \cap 2^{l}$. 
By Lemma \ref{lem:dense-ext} there exists $e \in \LDF$ 
with $T_p \cap 2^{l} = \tuple{e}$, $e \leq d$, so
$q = (T_p \cap 2^{l}, T_p, e)$ is the required condition.
\end{proof}

Partial functions $f,g$ are {\em compatible}, denoted by $f\simeq g$,   if $f(n)=g(n)$ for each $n$ with $f(n)\de, g(n)\de$.

\begin{lem}[Cone Avoiding]\label{lem:ppst-nPA_ca}
For each Turing functional $\Psi$, each condition $p$ and each 
non-computable $x$, there exists $q \leq p$ such that $x \neq \Psi(S)$ for any $S \in [q]$.
\end{lem}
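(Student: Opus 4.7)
The argument proceeds by contradiction: assume that for every $q \leq p$ there exists $S \in [q]$ with $\Psi(S) = x$. The goal is to derive that $x$ is computable, contradicting the non-computability of $x$. The heart of the plan is a $\Sigma^0_1$ forcing question that, under the contradiction assumption, defines a total single-valued function agreeing with $x$.

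First, apply Lemma \ref{lem:ppst-nPA_condensation} to replace $p$ by an extension in which $\mu_\sigma([T_p])$ exceeds $d_p(\sigma)$ by a definite margin at every $\sigma \in F_p$, giving density slack for the later steps. Then define the partial forcing question
\[
  g(n) \simeq i \iff \exists E \in \TTast: F_p \preceq E \wedge E \subseteq T_p \wedge E \text{ is strictly } d_p\text{-dense} \wedge \Psi(E;n)\downarrow = i,
\]
with the additional quantitative requirement (via Lebesgue density) that $\mu_\tau([T_p])$ be close to $1$ for every $\tau \in E$. Here $\Psi(E;n)$ treats the downward closure of $E$ as oracle, and the predicate is $\Sigma^0_1$ in the data of $p$. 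Since a total single-valued $\Sigma^0_1$ function is computable, it suffices to show that under the contradiction assumption $g$ is total, single-valued, and equals $x$.

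For totality at $n$: by the contradiction hypothesis applied to $q = p$, some $S \in [p]$ satisfies $\Psi(S;n) = x(n)$. For $l$ larger than the use of this computation, and large enough for Lebesgue density to apply both to $S$ (so that $S \cap 2^l$ is strictly $d_p$-dense) and to $T_p$ (so that most $\tau$ of length $l$ in $T_p$ satisfy $\mu_\tau([T_p]) \approx 1$), the set $E := S \cap 2^l$, thinned to the ``Lebesgue-good'' $\tau$'s if needed, witnesses $g(n)\downarrow = x(n)$. For single-valuedness at $n$: given witnesses $E_0, E_1$ with values $i_0, i_1$, define $T^*_j := \{ \sigma \in T_p : \sigma \text{ is comparable with some element of } E_j \}$. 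The slack from the condensation step together with the Lebesgue-goodness of elements of $E_j$ ensures $T^*_j$ is strongly positive and strictly $d_p$-dense, so Lemma \ref{lem:dense-ext} supplies $e_j \in \LDF$ with $\tuple{e_j} = E_j$, $e_j \leq d_p$, and $T^*_j$ being $e_j$-dense. Thus $q_j := (E_j, T^*_j, e_j)$ is a condition below $p$. The contradiction hypothesis now yields $S_j \in [q_j]$ with $\Psi(S_j) = x$, and since $E_j \prec S_j$ the use principle forces $i_j = \Psi(E_j;n) = \Psi(S_j;n) = x(n)$; hence $i_0 = i_1 = x(n)$.

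The main obstacle is the single-valuedness step, where each finite density-witness $E_j$ must be promoted to a genuine forcing condition $q_j \leq p$. This forces us to (i) secure enough margin of $\mu_\sigma([T_p])$ over $d_p(\sigma)$ at every $\sigma \in F_p$ via the preliminary condensation, and (ii) propagate strong positivity from $T_p$ to $T^*_j$, so that the hypotheses of Lemma \ref{lem:dense-ext} are satisfied. The accompanying accounting of density parameters, using Lebesgue density to push the slack from $F_p$ down to the arbitrary level of $E_j$, is delicate but routine.
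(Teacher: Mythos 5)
The central gap is in the claim that $g$ is $\Sigma^0_1$ and hence that a total single-valued $g$ is computable. Your forcing question existentially quantifies over $E\in\TTast$ satisfying, among other things, $E\subseteq T_p$ and a Lebesgue-goodness bound on $\mu_\tau([T_p])$; both of these clauses reference the infinite tree $T_p$, which is part of the condition and need not be computable. So $g$ is at best $\Sigma^0_1(T_p)$ (your own phrase ``$\Sigma^0_1$ in the data of $p$'' already concedes this), and your argument only yields $x\leq_T T_p$, which is no contradiction: in the inductive construction the trees $T_{p_i}$ are produced by forcing and may well compute $x$. There is no purely internal way to finish from here; some external basis theorem has to enter. (A secondary issue: when you ``thin $E$ to the Lebesgue-good $\tau$'s'', you change the downward closure of $E$, and the computation $\Psi(E;n)\downarrow$ obtained by the use principle from $\Psi(S;n)$ may no longer hold; this needs care but is not the core problem.)

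The paper resolves this with a different structure. It defines a $\Pi^0_1$ class $\mathcal{U}$ of weakly $(d_p+2\epsilon)$-dense trees $S$ that are $\Psi$-cohesive — any two weakly $(d_p+\epsilon)$-dense subtrees of $S$ yield compatible $\Psi$-outputs — crucially \emph{without} referencing $T_p$, so that $\mathcal{U}$ is genuinely $\Pi^0_1$ in the finite data $d_p,\epsilon$. Then: if $\mathcal{U}=\emptyset$, in particular $T_p\notin\mathcal{U}$, giving two incompatible finite computations inside $T_p$, one of which disagrees with $x$ and can be promoted (after Lebesgue-density repair) to a condition forcing $\Psi(S;n)\downarrow\neq x(n)$; if $\mathcal{U}\neq\emptyset$, the Jockusch--Soare cone-avoidance basis theorem for nonempty $\Pi^0_1$ classes provides $S\in\mathcal{U}$ with $x\not\leq_T S$, and restricting $T_p$ to $S$ produces a condition on which every $\Psi$-output is $S$-computable by cohesiveness, hence $\neq x$. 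That basis theorem is the indispensable external ingredient your proposal omits. The ``total single-valued $\Sigma^0_1$ function is computable'' shortcut is the right idea for the non-PA requirement (Lemma \ref{lem:ppst-nPA_ndnc}), where the target $n\mapsto\Phi_n(n)$ is itself partial computable, but it cannot replace the $\Pi^0_1$ basis theorem when avoiding an arbitrary non-computable $x$.
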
\begin{proof}
By Lemma \ref{lem:ppst-nPA_condensation} we may assume that there exists a rational $\epsilon$ such that
\begin{equation}\label{VwbxUibqEy}
\sigma \in F_p
\hspace{0.3cm}\Rightarrow\hspace{0.3cm}
3\cdot \left( 1 -  \mu_{\sigma}([T_p]) \right) < 3\cdot \epsilon < 1 - d_p(\sigma).
\end{equation}
Then $T_p$ is $(d_p + 2 \cdot \epsilon)$-dense. 
Let $\mathcal{U}$ be the set of weakly $(d_p + 2 \cdot \epsilon)$-dense trees $S \in \TT$ such that
for any $S_0, S_1 \in \TT$: 
\[
S_i\ \text{are weakly $(d_p + \epsilon)$-dense subtrees of } S\  \Rightarrow\ \ \Psi(S_0)\simeq \Psi(S_1).
\]
We may identify $\mathcal{U}$  with a $\Pi^0_1$ class of Cantor space: the set of weakly $(d_p + \epsilon)$-dense finite trees is computable, thus the space of weakly $(d_p + \epsilon)$-dense infinite trees can be identified with a $\Pi^0_1$ class in Cantor space. The pairs $(S_0, S_1)$, such that both are $(d_p + \epsilon)$-dense and disagree via $\Psi$, form another computable set. So the weakly $(d_p + \epsilon)$-dense infinite trees, which do not contain such disagreeing pairs, form another $\Pi^0_1$ class,  denoted by $\mathcal{U}$.

The idea for the remaining argument is to look for a pair weakly dense finite subtrees of $T_p$ which computes a disagreement via $\Psi$. 
The existence of such pair is $\Sigma^0_1$ in $T_p$, thus too complicated for our purpose. 
Instead, we look through all infinite trees similar to $T_p$, in the space of all weakly $(d_p + \epsilon)$-dense infinite trees.
This trick is common in reverse mathematics of Ramsey theory  \cite{cholak2001strength, dzhafarov2009ramseys, wang2013rainbow}.
The trees in this space, which give a positive answer to the above existence question, form a $\Sigma^0_1$ class $V$, which is 
complement of $\mathcal{U}$. 
Then $V$ is the union of basic open sets determined by weakly dense finite trees containing a pair producing incompatible computations.
If all of the finite subtrees of $T_p$ contain a non-extendible node, the infinite subtree $S$ of $T_p$ obtained by removing all non-extendible nodes satisfies $\mu([S]) = \mu([T_p])$. 
Thus $S$ is in the space of weakly dense trees but not in $V$ and could serve as $S_0$ in Case 1 below.

{\em Case 1: $\mathcal{U} = \emptyset$.} 
In particular, $T_p \not\in \mathcal{U}$. 
Pick a weakly $(d_p+\epsilon)$-dense $S_0 \subseteq T_p$ in $\TT$, and $m$ 
with $\exists n,\ \Psi(S_0 \cap 2^{< m}; n) \downarrow \neq x(n)$. Let
\[
  T_q := (S_0 \cap 2^{< m}) \cup \sqbrad{\tau \in T_p}{\tau \text{ extends some } \sigma \in S_0 \cap 2^m}.
\]
Then $\forall \sigma\in F_p,\  \mu_\sigma([T_q]) \geq \mu_\sigma([S_0]) \geq  d_p(\sigma)+\epsilon$, so
$T_q$ is $d_p$-dense. 

By Lemma \ref{lem:dense-ext}, $\exists d_q\in \LDF$ with domain $F_q = S_0 \cap 2^m$ 
 such that  $d_q \leq d_p$ and $T_q$ is $d_q$-dense. 
Hence $q = (F_q,T_q,d_q)$ is a condition extending $p$ and 
 $\forall S \in [q], \Psi(S; n) \downarrow \neq x(n)$.

{\em Case 2: $\mathcal{U} \neq \emptyset$.} By the cone-avoidance property of $\Pi^0_1$ classes
\citep[Theorem 4.5]{002945272010009} we may  
pick $S \in \mathcal{U}$ such that $x\not\leq_T S$. 
Then
\[
  T_q := \sqbrad{\sigma}{\mu_\sigma([S \cap T_p]) > 0}
\]
is strongly positive. By $S\in \mathcal{U}$, the definition of $\mathcal{U}$ and
\eqref{VwbxUibqEy}, $T_q$ is also
$(d_p + \epsilon)$-dense.
 
So $q = (F_p, T_q, d_p+\epsilon) \leq p$ and 
every $R \in [q]$ is a $(d_p+\epsilon)$-dense subtree of $S$. 

Then if $\Psi(R)$ is total, $\Psi(R)\leq_T S$ and thus $\Psi(R)\not\simeq x$.
\end{proof}

\begin{lem}[Non-PA]\label{lem:ppst-nPA_ndnc}
For each Turing functional $\Psi$ and each condition $p$, 
 there exists $q \leq p$ 
 such that if $S \in [q]$ and $\Psi(S)$ is total then $\Psi(S; n) = \Phi_n(n) \downarrow$ for some $n$.
\end{lem}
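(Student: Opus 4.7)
The plan is to adapt the diagonalisation in Lemma~\ref{lem:Patey-non-dnc} to the density-sensitive forcing of Definition~\ref{def:ppst-nPA_forcing}. Assume without loss of generality that $\Psi$ takes values in $\{0,1\}$. I will introduce an auxiliary partial computable function $f$ via a $\forall S\,\exists E$ formula over a $\Pi^0_1$ class of sufficiently dense subtrees of $T_p$, and split on the totality of $f$: totality will let us diagonalise and force $\Psi(G;n)=\Phi_n(n)$, while partiality will produce witnesses $S_0,S_1$ whose intersection forces $\Psi(G;n)\uparrow$. The critical difference from Patey's positive-measure setting is that the extension in each case must be a condition; in particular the finite tree $E$ witnessing $f$ must be thick enough for Lemma~\ref{lem:dense-ext} to deliver the required lower density function $d_q\le d_{p'}$.

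First I will apply Lemma~\ref{lem:ppst-nPA_condensation} to replace $p$ by $p'\le p$ with $\mu_\tau([T_{p'}])$ much closer to $1$ than $d_{p'}(\tau)$ is, so that one can pick rationals $h(\tau),g(\tau)$ on $F_{p'}$ satisfying
\[
d_{p'}(\tau)<h(\tau),\qquad \tfrac{1+h(\tau)}{2}<g(\tau)<\mu_\tau([T_{p'}]).
\]
Let $\mathcal{U}$ be the $\Pi^0_1$ class of weakly $g$-dense subtrees of $T_{p'}$, and note $T_{p'}\in\mathcal{U}$. Define
\[
f(n)\simeq\min\setbra{i<2}{\forall S\in\mathcal{U}\ \exists E\in\TTast:\ F_{p'}\preceq E\subset S,\ \abs{E\cap\dbra{\sigma}}\ge h(\sigma)\,2^{|E|-|\sigma|}\ \forall\sigma\in F_{p'},\ \Psi(E;n)\downarrow = i}.
\]
Since the thickness inequality is computable, compactness of $\mathcal{U}$ turns the outer $\forall S$ quantifier over a $\Sigma^0_1(S)$ matrix into a $\Sigma^0_1$ predicate, so $f$ is partial computable and therefore diagonally computable.

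If $f$ is total, pick $n$ with $f(n)=\Phi_n(n)\downarrow$; applying the defining clause to $S=T_{p'}$ yields an $h$-thick $E_0\in\TTast$ with $\Psi(E_0;n)\downarrow=f(n)$. Let $T_q:=\setbra{\sigma\in T_{p'}}{\sigma\text{ is comparable with some }\tau\in E_0}$ and prune to its strongly positive core. The $h$-thickness of $E_0$ together with the density of $T_{p'}$ after Condensation forces $\mu_\sigma([T_q])>d_{p'}(\sigma)$ on $F_{p'}$, and the available slack up to $\mu_\tau([T_{p'}])$ lets Lemma~\ref{lem:dense-ext} deliver $d_q\le d_{p'}$ with $\tuple{d_q}=E_0$ and $d_q$ chosen pointwise at least $h$. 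Under the standard convention that $\Psi(E_0;n)\downarrow$ only queries strings of length at most $|E_0|$, every $G\in[q]$ satisfies $\Psi(G;n)=\Psi(E_0;n)=\Phi_n(n)$, as required.

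If instead $f(n)\uparrow$ for some $n$, pick $S_0,S_1\in\mathcal{U}$ witnessing failure for $i=0,1$. Bonferroni gives $\mu_\tau([S_0\cap S_1])\ge 2g(\tau)-1>h(\tau)>d_{p'}(\tau)$, so pruning $S_0\cap S_1$ to $T_q:=\setbra{\sigma}{\mu_\sigma([S_0\cap S_1])>0}$ preserves $\mu_\tau$-masses, and Lemma~\ref{lem:dense-ext} produces $d_q\le d_{p'}$ with $\tuple{d_q}=F_{p'}$, again chosen pointwise at least $h$. Any $G\in[q]$ is a subtree of both $S_0$ and $S_1$; if $\Psi(G;n)\downarrow=i\in\{0,1\}$, truncating $G$ at the maximal query level $l$ would give $E:=G\cap 2^l\subset S_i$ with $\Psi(E;n)=i$, and since $G$ is $d_q$-dense with $d_q\ge h$ this $E$ would automatically be $h$-thick in $F_{p'}$, contradicting the choice of $S_i$; hence $\Psi(G;n)\uparrow$. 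The main obstacle is the joint calibration of $d_{p'}<h$, $(1+h)/2<g$, and $g<\mu_\tau([T_{p'}])$: simultaneous satisfiability demands $\mu_\tau([T_{p'}])>(1+d_{p'}(\tau))/2$ with some slack, which is precisely what the initial Condensation step buys, at the cost of raising $d_{p'}$ almost to $1$.
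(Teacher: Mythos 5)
Your overall strategy matches the paper's: define a partial function $f$ by searching, over a $\Pi^0_1$ class of dense trees, for a value $i$ forced by $\Psi$ on some suitably thick finite subtree, then split on totality of $f$. However there are two genuine gaps in the calibration and setup.

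\textbf{The class $\mathcal{U}$ must not depend on $T_{p'}$.}
You define $\mathcal{U}$ as the class of weakly $g$-dense \emph{subtrees of $T_{p'}$}. The tree $T_{p'}$ is an arbitrary infinite tree and has no reason to be computable (the ambient positive tree $\hat{T}$ of Theorem \ref{xJFhzQCXsu} is arbitrary, and $T_{p'}$ is obtained by further pruning/intersection). So the predicate $S \subseteq T_{p'}$ is only $\Pi^0_1(T_{p'})$, your $f$ is only $\Sigma^0_1(T_{p'})$, and the inference ``$f$ is $\Sigma^0_1$ hence diagonally computable'' is no longer available — this is exactly the property the whole lemma hinges on. The paper's $\mathcal{U}$ is the class of \emph{all} weakly $d'$-dense trees in $\TT$, a genuinely computable $\Pi^0_1$ class; $T_p$ enters only afterward, by intersecting $S_0\cap S_1\cap T_p$ in the divergence case, which preserves density since $T_p$ is itself close to full measure on $F_p$ after condensation. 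You should drop the restriction to subtrees of $T_{p'}$ and rework the Bonferroni bound in Case 2 for the triple intersection $S_0\cap S_1\cap T_{p'}$.

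\textbf{The constraint $d_{p'}(\tau)<h(\tau)$ is too weak for Case 1.}
In your Case 1, after taking $T_q:=\{\sigma\in T_{p'}:\sigma$ comparable with some $\tau\in E_0\}$, the correct lower bound on its density is
\[
\mu_\sigma([T_q])\ \geq\ \mu_\sigma([T_{p'}])-\bigl(1-h(\sigma)\bigr),
\]
obtained by subtracting from $\mu_\sigma([T_{p'}])$ the mass of level-$l$ nodes of $T_{p'}$ that are \emph{not} in $E_0$. For this to exceed $d_{p'}(\sigma)$ you need $h(\sigma)>1+d_{p'}(\sigma)-\mu_\sigma([T_{p'}])$, which is strictly stronger than $h(\sigma)>d_{p'}(\sigma)$ (since $\mu_\sigma([T_{p'}])<1$). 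Your listed constraints only give $h>d_{p'}$ and $(1+h)/2<g<\mu_\sigma$, and one can make all of those hold while having $\mu_\sigma-1+h<d_{p'}$ (for instance $d_{p'}=0.9$, $h=0.91$, $\mu_\sigma\approx 0.96$). The paper's choice $h=d_p+\epsilon$ together with the condensation bound $1-\mu_\sigma([T_p])<\epsilon$ (from $4(1-\mu_\sigma)<4\epsilon<1-d_p$) gives exactly $h>1+d_p-\mu_\sigma$, closing the gap. You should strengthen your condensation/constraints similarly, e.g.\ impose $h(\tau)>1+d_{p'}(\tau)-\mu_\tau([T_{p'}])$ and condense far enough that this is compatible with $(1+h)/2<g<\mu_\tau([T_{p'}])$.

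With these two repairs your proof becomes essentially identical to the paper's.
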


\begin{proof}
By Lemma \ref{lem:ppst-nPA_condensation},
 we may assume that there exists a rational $\epsilon$ such that
\begin{equation}\label{kknui7BK8J}
\sigma\in F_p
\hspace{0.3cm}\Rightarrow\hspace{0.3cm}
4\cdot \left( 1 -  \mu_\sigma([T_p]) \right) < 4\cdot \epsilon < 1 - d_p(\sigma).
\end{equation}
Define $d'\in \LDF$ with domain  $F_p$ and $d'(\sigma) := 1-\epsilon$.
Then $T_p$ is $d'$-dense. 

Define a partial function $f$ as follows. 
For each $n$, search for the least pair $(l,i)$ for which we notice that 
each weakly $d'$-dense $S \in \TT$ contains a $(d_p+\epsilon)$-dense $E \in \TTast$ satisfying
\begin{equation}\label{eq:ppst-nPA_E}
 E \subseteq 2^{l}\ \  \wedge\ \  \Psi(\{\sigma: \sigma \text{ has an extension in } E\}; n) \downarrow = i.
\end{equation}
If such a pair $(l,i)$ is found then let $f(n) = i$, otherwise $f(n)\un$. 
Since the class of weakly $d'$-dense elements of $\TT$ can be viewed as a $\Pi^0_1$ class, 
 it follows that $f$ is $\Sigma^0_1$. 
Hence $f$ is not diagonally non-computable.

{\em Case 1: $f$ is total.}  
Fix $n$ such that $f(n) = \Phi_n(n) \downarrow$. 
As $T_p$ is $d'$-dense, $T_p$ contains a $(d_p+\epsilon)$-dense $E$ satisfying \eqref{eq:ppst-nPA_E}. Let
\[
T_q = E \cup \sqbrad{\sigma \in T_p}{\sigma \text{ is comparable with some } \tau \in E}.
\]
Then $E \prec T_q$ and $T_q$ is $d_p$-dense. 
By Lemma \ref{lem:dense-ext}, $d_p$ admits an extension $d_q$ 
 such that $\tuple{d_q} = E$ and $T_q$ is $d_q$-dense. 
Let $q = (E,T_q,d_q)$, which is clearly an extension of $p$. 
If $S \in [q]$ then $\Psi(S; n) \downarrow = \Phi_n(n)$.

{\em Case 2: $\exists n\ f(n)\un$.} By compactness, 
for $i < 2$ there exists  a weakly $d'$-dense $S_i \in \TT$ with 
\[
  \Psi(\{\sigma: \sigma \text{ has an extension in } E\}; n)  \not\simeq i 
\]
for each $(d_p+\epsilon)$-dense subtree $E \in \TTast$  of $S_i$.
Then
\[
T_q := \{\sigma: \mu_\sigma([S_0 \cap S_1 \cap T_p]) > 0\}
\]
is strongly positive.
Since $d'(\sigma) := 1-\epsilon$ and each $S_i$ is weakly $d'$-dense, $\mu_{\sigma}(S_0\cap S_1)\geq 1-2\epsilon$.

By \eqref{kknui7BK8J}, for each $\sigma\in \tuple{p}$
\[
\mu_{\sigma}([T_q])=\mu_{\sigma}([S_0 \cap S_1 \cap T_p])\geq 1-3\epsilon> d_p+\epsilon
\]
so $T_q$ is $(d_p+\epsilon)$-dense and
$q = (F_p,T_q,d_p+\epsilon)$ is an extension of $p$. 

Every $S \in [q]$ is a $(d_p+\epsilon)$-dense subtree of $S_0 \cap S_1$. 
By the choice of the $S_i$, $\Psi(S; n) \uparrow$.
\end{proof}

We conclude the proof of Theorem \ref{xJFhzQCXsu}. Let $(\Psi_i)$
be an enumeration of all Turing functionals and fix a positive tree $T$ and a non-computable real $x$. 
By Lemmata \ref{lem:ppst-nPA_branching}, \ref{lem:ppst-nPA_ca} and \ref{lem:ppst-nPA_ndnc} 
there exists a descending sequence $(p_i)$ of conditions such that
$T_{p_0} = T\ \wedge\ d_{p_0}(\emptyset) > 0$ and:
\begin{enumerate}[\hspace{0.3cm}(a)]
\item $\forall i\ \exists j>i: $  every $\sigma \in F_{p_i}$ has two incomparable extensions in $F_{p_j}$,
\item $\forall i\ \exists j>i\ \forall S \in [p_j]:  \parB{\Psi_n(S) \neq x\ \wedge\ \textrm{$\Psi_n(S)$ is not diagonally non-computable}}$.
\end{enumerate}
Let $P := \bigcup_i F_{p_i} \in \bigcap_i [p_i]$.
By Lemma \ref{lem:ppst-nPA_branching}, $P$ is perfect.
Since every $F_{p_i}$ is weakly $d_{p_0}$-dense, $P$ is weakly $d_{p_0}$-dense as well.
Thus $P$ is a positive perfect subtree of $T$. Hence $x \not\leq_T P$ and  by (b), $P$ is not PA.

{\bf Model: positive avoiding PA.}
We may now use Theorem \ref{xJFhzQCXsu} in relativized form in order to obtain an $\omega$-model of
$\RCA+\PPp+\neg\WKL$, proving
clause (c) of  Theorem \ref{rms6rS57S4}.
Let $P$ be a non-empty \pz class of reals
with a uniform bound on their randomness deficiency.
A direct relativization of the above argument to a non-$PA$ real $z$ gives the following:
\begin{equation}\label{mK2oHOYzd}
\parbox{13cm}{if $z$ is not $PA$:
$\exists$  perfect positive pathwise-$z$-random tree $T$ such that $z\oplus T$ is not $PA$.}
\end{equation}
We  define  $(z_n)$  where $z_0$ is computable, $\forall n, z_n\leq_T z_{n+1}$ and  consider 
the model $\II$ which is the  the union of all $\II_n:=\sqbrad{z}{z\leq_T z_n}$.
Inductively assume that $z_i, i\leq n$
are defined such that  $z_{i}\leq_T z_{i+1}$ for $i<n$  and
$z_n$ is not $PA$.  We use \eqref{mK2oHOYzd} in order to obtain a positive pathwise-$z_n$-random perfect tree $T_n$ 
such that $z_n\oplus T_n$ is not $PA$, and let $z_{n+1}:=z_n\oplus T_n$.
Clearly $\II$ does not contain any $PA$ reals. Given any positive tree $T$ in $\II$, there exists $n$ such that $T\in \II_n$
and by Lemma \ref{nqjFhRoRlv}, there exists a positive subtree $T'$ of $T$ in $\II_{n+1}$, hence in $\II$.

\section{Conclusion}\label{FWznALqzcD}
We have studied the pathwise-random trees and demonstrated their utility in
calibrating compactness principles in second-order arithmetic.
We pointed out that this type of randomness on trees is quite different from the notions of algorithmically random 
branching processes studied in \citep{algClosedBarmp, DKH101007,DIAMOND2012519}, and closer to the effective
Poisson point processes studied in \citep{Axonphd,axon2015}. Intuition about hitting probabilities in such processes
informed the constructions in \S 2, \S 3. 

We studied three principles between $\WKL$ and $\WWKL$:
every positive  tree has a positive perfect subtree, a 
perfect subtree, and an infinite countable family of paths, denoted by $\PPp$, $\PPs$, $\PPm$ respectively.
We separated $\WWKL$ from $\PPm$ (over $\RCA$) via a model generated by random reals. 
Separations of $\PPs$ from  $\PPp$ and $\PPp$ from $\WKL$ were achieved  via forcing arguments. 
It would be interesting to know if some type of algorithmic stochastic process produces
perfect pathwise-random trees that do not compute positive trees of finite deficiency, which are the
building blocks for separating $\PPs$ from $\PPp$. The separation of  $\PPm$ from $\PPs$ is left open.

\end{document}